\documentclass[12pt]{amsart}

\usepackage[usenames,dvipsnames,svgnames,table]{xcolor}

\usepackage{xargs}
\usepackage[colorinlistoftodos,prependcaption,textsize=tiny,linecolor=red,backgroundcolor=red!25,bordercolor=red]{todonotes}
\setlength{\marginparwidth}{2cm}
\newcommandx{\nantel}[2][1=]{\todo[linecolor=brown,backgroundcolor=red!25,bordercolor=red,#1]{#2 ---Nantel}}
\newcommandx{\caro}[2][1=]{\todo[linecolor=purple,backgroundcolor=brown!25,bordercolor=brown,#1]{#2 ---C}}
\newcommandx{\todocite}[2][1=]{\todo[linecolor=green,backgroundcolor=green!25,bordercolor=green,#1]{cite: #2}}
\newcommandx{\franco}[2][1=]{\todo[linecolor=blue,backgroundcolor=blue!25,bordercolor=blue,#1]{#2 ---Franco}}


\newtheorem{theorem}{Theorem}
\newtheorem{proposition}[theorem]{Proposition}

\newtheorem{lemma}[theorem]{Lemma}
\theoremstyle{definition}
\newtheorem{remark}[theorem]{Remark}
\newtheorem{example}[theorem]{Example}
\newtheorem{definition}[theorem]{Definition}

\newtheorem{problem}[theorem]{Problem}


\frenchspacing


\usepackage{amssymb}
\usepackage{amsmath}
\usepackage{amsthm}
\usepackage{amsfonts}
\usepackage{amscd}
\usepackage{mathtools}
\usepackage{mathrsfs}
\usepackage[mathscr]{eucal}

\usepackage{listings, lstautogobble}
\lstset{basicstyle=\color{ForestGreen}\small\ttfamily, autogobble=True}
\lstnewenvironment{sagecode}{}{}

\usepackage{vmargin}
\setpapersize{USletter}
\setmargrb{2.5cm}{2cm}{2.5cm}{2.cm}
\hfuzz1.5pc

\usepackage{version}
\excludeversion{sagecode}

\usepackage[inline]{enumitem}

\usepackage{tikz}
\usepackage{tkz-graph}
\usepackage{tkz-berge}
\usetikzlibrary{arrows,shapes}


\newcommand{\NN}{{\mathbb N}}

\newcommand{\RR}{{\mathbb R}}
\newcommand{\R}{\mathscr{R}}
\newcommand{\IP}{\operatorname{IP}}
\newcommand{\BP}{\operatorname{BP}}
\newcommand{\SSP}{\ensuremath{\operatorname{SSP}}}
\newcommand{\rank}{\operatorname{rank}}
\newcommand{\Bases}{\mathcal B}
\newcommand{\Poly}{{\mathcal P}}
\newcommand{\Cliq}{\operatorname{Cliq}}

\newcommand{\NNP}{\operatorname{NN}}
\newcommand{\NCP}{\operatorname{NC}}
\newcommand{\Stab}{\operatorname{Stab}}

\def\b{\textcolor{blue}}

\definecolor{cof}{RGB}{219,144,71}
\definecolor{pur}{RGB}{186,146,162}
\definecolor{greeo}{RGB}{91,173,69}
\definecolor{greet}{RGB}{52,111,72}

\newcommand{\defcolor}[1]{{\color{DodgerBlue}#1}}
\newcommand{\demph}[1]{\defcolor{{\sl #1}}}

\def\Ind{\mathcal I}
\newcommandx\conv{\operatorname{conv}}

\title[Stable set polytopes]{Stable set polytopes and their 1-skeleta}

\author[]{%
    Farid Aliniaeifard%
    \email{farid@math.ubc.ca}%
    \address{Department of Mathematics, The University of British Columbia, Vancouver BC V6T 1Z2, Canada}
    \and
    Carolina Benedetti%
    \email{{c.benedetti@uniandes.edu.co}}%
    \address{Departamento de Matem\'aticas, Universidad de los Andes, Bogot\'a}
    \and
    Nantel Bergeron%
    \email{bergeron@mathstat.yorku.ca}
    \address{Department of Mathematics and Statistics, York University, Toronto ON M3J 1P3, Canada}
    \and
    Shu Xiao Li%
    \email{lishuxiao@dlut.edu.cn}%
    \address{School of Mathematical Science, Dalian University of Technology, Dalian, China}
    \and
    Franco Saliola%
    \email{saliola.franco@uqam.ca}
    \address{LACIM, Depart{\'e}ment de math{\'e}matiques, Universit{\'e} du Qu{\'e}bec {\`a} Montr{\'e}al, Montr{\'e}al QC, Canada}
}
    \thanks{Benedetti: Research supported by FAPA grant, Faculty of Science Universidad de los Andes.}
    \thanks{Bergeron: Research supported in part by NSERC and York Research Chair.}%
    \thanks{Saliola: Research supported in part by NSERC.}%

\begin{document}

\begin{abstract}
    We characterize the edges of two classes of $0/1$-polytopes. The first
    class corresponds to the stable set polytope of a graph $G$ and  includes
    chain polytopes of posets, some instances of matroid independence
    polytopes, as well as newly-defined polytopes whose vertices correspond to
    noncrossing set
    partitions. In analogy with matroid basis polytopes, the second class is
    obtained by considering the stable sets of maximal cardinality.
    We investigate how the class of $0/1$-polytopes whose edges satisfy our
    characterization is situated within the hierarchy of $0/1$-polytopes.
    This includes the class of matroid polytopes. We also study the diameter of
    these classes of polytopes and improve slightly on the Hirsch bound.
\end{abstract}

\keywords{0/1-polytopes, 1-skeleton, vertex packing problem}

\maketitle

\setcounter{tocdepth}{2}
\tableofcontents

\section{Introduction}

The family of $0/1$-polytopes consists of polytopes in $\RR^n$ whose vertices contain only entries 0 or 1.
We study several classes of $0/1$-polytopes, with a focus on those associated with a graph $G$. The first one
is the \demph{stable set polytope} of $G$ (also known as the \demph{vertex
packing polytope} in the literature) whose vertices are indexed by the \emph{stable
sets} of $G$ (see  \cite[Chap 9]{GLS93}).
This class includes several polytopes arising in algebraic combinatorics such
as the \emph{chain polytope of a poset}, some instances of \emph{matroid
independence polytopes}, and the \emph{unipotent polytopes} introduced in
\cite{Thiem-FPSAC2017, Thiem2018}. We also identify a family of
polytopes whose vertices correspond to noncrossing set partitions.
In analogy with the relationship between matroid independence polytopes and
\emph{matroid basis polytopes}, we study the polytope whose vertices correspond
to stable sets of maximal cardinality. This includes as
a special case the \emph{Birkhoff polytopes}.

Our contributions are described as follows.
\begin{enumerate}[wide, topsep=1ex, itemsep=1ex]
    \item
        We present a new characterization of the edges and $1$-skeleta of stable set
        polytopes (see Theorem~\ref{RP1skeleton}).
        Chv\'atal gives in \cite{Chvatal1975}  a beautiful characterization of these edges as well, although that  description
        is valid only for stable sets of graphs and does not generalize easily to other $0/1$-polytopes.
        We will show in Section~\ref{hierarchy} that our
        characterization extends to other classes of $0/1$-polytopes.
        Our result can be proved using Chv\'atal's result,
        but we give a direct proof that uses a novel
        characterization of the edges of a polytope (see Lemma~\ref{lem:edge}).

    \item
        Among the family of stable set polytopes, we identify two new families,
        the nonnesting polytopes $\NNP_n$ (Section~\ref{sec:NNP}) and
        the noncrossing polytopes $\NCP_n$ (Section~\ref{sec:NCP}), whose
        vertices are indexed by nonnesting
        and noncrossing set partitions of $[n]$, respectively.
        In addition to describing their $1$-skeleta via
        Theorem~\ref{RP1skeleton}, we describe some of their facets (see
        Section~\ref{facets-NNP} and Section~\ref{facets-NCP}, respectively).

    \item
        We investigate how the class of $0/1$-polytopes whose edges
        satisfy our characterization is situated within the hierarchy of
        $0/1$-polytopes.
        We show that this class is properly contained in the class of all
        $0/1$-polytopes and that it properly contains the stable set
        polytopes, the matroid basis polytopes, and the matroid independent set
        polytopes
        (see Figure~\ref{inclusions-polytope-classes} and the results of
        Section~\ref{hierarchy}).
        We also characterize the intersection of the class of stable set
        polytopes and the class of independent set polytopes of matroids.
  Finally, the family of simplicial complex polytopes (see Section~\ref{sec:SCP}) provides examples of $0/1$-polytopes not always satisfying criterion (E) as given in Section \ref{hierarchy}.
        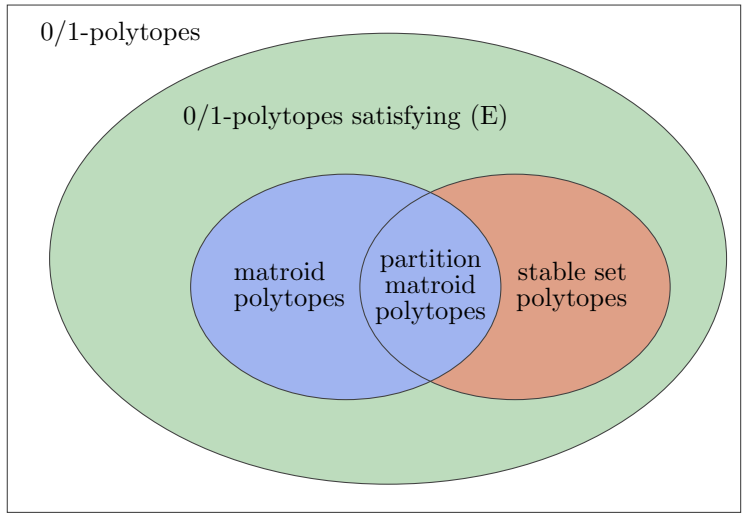
\begin{figure}[htpb]
            \begin{tikzpicture}[scale=0.75]
                \draw[Black!75, fill=White] (-4, -4) rectangle (9, 5);
                \begin{scope}[blend group = hard light]
                   \fill[Black!75, fill=ForestGreen!30!white] (2.75, 1) ellipse (5.7 and 3.7);
                   \draw[Black!75, dashed, thick, fill=red!20!white] (3.2, -0.2) ellipse (5.5 and 3.5);
                \end{scope}
                \begin{scope}[blend group = hard light]
                    \fill[BrickRed!40]   (5, 0) ellipse (2.75 and 2);
                    \fill[RoyalBlue!50]  (2, 0) ellipse (2.75 and 2);
                \end{scope}
                \draw[Black!75]   (5, 0) ellipse (2.75 and 2);
                \draw[Black!75]  (2, 0) ellipse (2.75 and 2);
                \node at (-2, 4.5)  {\footnotesize $0/1$-polytopes};
                \node at (2.5, 3.7)     {\footnotesize $0/1$-polytopes satisfying \eqref{1-skeleton-condition}};
                \node at (3.1, 2.3)     {\footnotesize simplicial complex polytopes};
                \node at (1, 0)     {\footnotesize \begin{tabular}{l}matroid \\[-0.5ex] polytopes\end{tabular}};
                \node at (3.5, 0)   {\footnotesize \begin{tabular}{c}partition \\[-0.5ex] matroid \\[-0.5ex] polytopes\end{tabular}};
                \node at (6, 0)     {\footnotesize \begin{tabular}{r}stable set \\[-0.5ex] polytopes\end{tabular}};
            \end{tikzpicture}
            \caption{The classes of $0/1$-polytopes studied in
            this paper. Property (E) is defined in Section~\ref{hierarchy}.}
            \label{inclusions-polytope-classes}
        \end{figure}

    \item
        In Section~\ref{section-on-the-diameter}, we study the Hirsch
        conjecture as it pertains to our setting. Recall that the \emph{Hirsch
        conjecture} asserts that the diameter of every $d$-dimensional (convex)
        polytope with $n$ facets is at most $n - d$ (see
        Section~\ref{section-on-the-diameter} for definitions). It is related
        to the \emph{travelling salesman problem} and the \emph{simplex method}
        as it provides an easy-to-compute bound on the minimum distance between
        any two vertices. Although the Hirsch conjecture is false in
        general~\cite{Santos2012}, it is true for
        $0/1$-polytopes~\cite{Naddef1989}, and we provide an improvement on this
        bound for some of the polytopes we study here.

    \item
        In Section~\ref{sec:OpenProb}, we conclude our investigation with
        a discussion of some open problems and conjectures.
\end{enumerate}

\section{$0/1$-polytopes, simplicial complex polytopes,
      and stable set polytopes}
      \label{sec:polytopes}

\subsection{Indicator vectors}
Let $X$ be a finite set and let $\RR^{X}$ denote a real vector space with
standard basis, denoted $\{e_x : x \in X\}$, whose elements are indexed by the
elements of $X$.
We associate an element $e_A$ of $\RR^{X}$ to each subset $A \subseteq X$ as
follows: define the \demph{indicator vector} of $A$ as
\begin{equation*}
    e_A = \sum_{a \in A} e_a \in \RR^{X}.
\end{equation*}
Note that $e_{\emptyset} = 0 \in \RR^{X}$.

It is often convenient to identify $\RR^{X}$ with $\RR^{|X|}$.
To do so, fix any total order $(x_1, x_2, \ldots, x_n)$ on $X$ and identify the
basis vector $e_{x_i} \in \RR^X$ with the standard basis vector $e_i \in
\RR^{|X|}$.

We will also make use of the usual inner product $\langle \cdot, \cdot
\rangle$ on $\RR^{X}$ for which $\{e_x : x \in X\}$ is an orthonormal basis.
Thus,
\begin{equation*}
    \langle e_x, e_A \rangle
    =
    \begin{cases}
        1, & \text{if $x \in A$,} \\
        0, & \text{if $x \notin A$.}
    \end{cases}
\end{equation*}
The following straightforward consequence will be used several times:
\begin{equation*}
    \langle e_A - e_B, e_x \rangle =
    \begin{cases}
        0, & \text{if $x \in A \cap B$ or $x \notin A \cup B$,} \\
        1, & \text{if $x \in A \setminus B$,} \\
        -1, & \text{if $x \in B \setminus A$.}
    \end{cases}
\end{equation*}

\subsection{$0/1$-polytopes and simplicial complex polytopes} \label{sec:SCP}
A  \demph{$0/1$-polytope} in $\RR^X$ is the convex hull of the indicator
vectors of the sets in a set $\mathcal C$ of subsets of $X$:
        $$\Poly_{\mathcal C}=\conv\left\{ e_A : A \in {\mathcal C} \right\} \subseteq \RR^X\,.$$
The set $\mathcal C$ is a (abstract) \demph{simplicial complex} if for any $ B\in {\mathcal C}$ and $A\subseteq B$ it follows that  $A\in{\mathcal C}$. In this case, we say that $\Poly_{\mathcal C}$ is a \demph{simplicial complex polytope}.
 In the following, we are interested in particular families of simplicial complex polytopes and their maximal faces.

\subsection{Stable set polytopes (SSP)}

Let $G = (V,E)$ be a \demph{simple} graph, that is, $G$ has no loops and no multiple edges.
A subset $A$ of the vertices $V$ is \demph{stable} for $G$ if no two
vertices in $A$ are connected by an edge in $G$.
Let \demph{$\Stab(G)$} denote the set of stable sets of $G$.
  It then follows that $\Stab(G)$ is a simplicial complex.
The \demph{stable set polytope} of $G$ is the convex hull of the indicator
vectors of the stable sets of $G$, that is
\begin{equation*}
    \SSP(G) = \Poly_{\Stab(G)} \subseteq \RR^V.
\end{equation*}

\subsection{Examples}
\label{sec:examples}

Our motivation for studying this family of polytopes is the vast variety of
polytopes that can be realized as stable set polytopes.

\subsubsection{Polytope of independent sets of a relation}
\label{sec:IP(R)}

Let $\R \subseteq X^2$ be a relation on a finite set $X$. A subset $A$ of $X$
is \demph{independent for $\R$} if and only if $(x, y) \notin \R$ and $(y, x)
\notin \R$ for all \emph{distinct} $x, y \in A$.
Let
\begin{equation*}
    \defcolor{\Ind(X, \R)} = \left\{ A \subseteq X : \text{$A$ is independent for $\R$} \right\}.
\end{equation*}
Note that since we require $x$ and $y$ to be \emph{distinct}, it follows that
$\{x\}$ is independent for all $x \in X$.
Note also that if $A$ is independent for $\R$, then every subset of $A$ is also
independent for $\R$.
Define the \demph{independent set polytope $\IP(\R)$} of a relation $\R$ to
be the convex hull of the indicator vectors of the independent sets for $\R$.

Note that $\IP(\R)$ is a special case of a stable set polytope.
Let $G_\R$ be the simple graph with vertex set $X$ and with edge set consisting of
$\{x, y\}$ if and only if $(x, y) \in \R$ or $(y, x) \in \R$. (Implicit in this definition
is the fact that $x$ and $y$ are distinct.) Note that a subset $A \subseteq X$
is stable for $G_\R$ if and only if $A$ is independent for $\R$. Consequently,
\begin{equation*}
    \IP(\R)=\SSP(G_\R).
\end{equation*}

\begin{example}
    Take $X = \{1,2,3\}$ and $\R = \{(1, 2), (2, 3)\}$.
    The independent sets for $\R$ are $\left\{\emptyset, \{1\}, \{2\}, \{3\}, \{1,3\}\right\}$
    so that
    \begin{equation*}
        \IP(\R) = \conv\{(0,0,0), (1,0,0), (0,1,0), (0,0,1), (1,0,1)\}.
    \end{equation*}
    \begin{sagecode}
        sage: X = {1, 2, 3}
        sage: R = {(1, 2), (2, 3)}
        sage: T = {A for A in Subsets(X) if all((x, y) not in R for x in A for y in A)}
        sage: T
        {{1}, {3}, {}, {2}, {1, 3}}
    \end{sagecode}
\end{example}

\subsubsection{$n$-cube}
\label{sec:n-cube}
Let $([n], \emptyset)$ be a graph with $n$ vertices and no edges. Then every
subset of of the vertices is stable, and the associated polytope is the
\demph{$n$-cube};
\emph{i.e.,} $\SSP([n], \emptyset) = \conv(\{0,1\}^n)$.

\subsubsection{Chain polytope of a poset}
\label{sec:chain-polytope}
Let $P = (X, \preceq)$ be a finite poset.
The \demph{comparability graph $G_P$} of $P$ is the graph whose vertex
set is $X$ and which contains an edge connecting $x$ and $y$ if and only if $x \prec y$ or $y \prec x$.
A subset $A \subseteq X$ is stable for $G_P$ if and only if it is an antichain of the poset.
Hence, $\SSP(G_P)$ is the \demph{chain polytope} of $P$
originally introduced by R.~Stanley in \cite{Stanley1986}.


\subsubsection{Bell polytopes}
\label{sec:bell-polytope}
Let $G$ be the graph with vertex set $X_n=\{(i,j): 1\le i<j\le n\}$ and with an
edge connecting $(i, j)$ and $(k, l)$ if and only if
\begin{equation*}
    \text{$i = k$ and $j \neq l$}
    \qquad\text{or}\qquad
    \text{$i \neq k$ and $j = l$.}
\end{equation*}

The stable sets of $G$, and hence the vertices of $\SSP(G)$, can be identified
with set partitions of the set $[n]:=\{1,\dots,n\}$, as follows.
Identify $e_{(i,j)} \in \RR^{X_n}$ with the upper triangular $n \times n$
matrix whose $(i, j)$ entry is $1$ and whose other entries are $0$.
Then $e_A$ is identified with a strictly upper triangular $0/1$-matrix.
If $A$ is stable for $G$, then the matrix $e_A$ has at most one $1$ in
each row and column.
We can encode such a matrix by a set partition $S = \{S_1,\dots,S_{\ell}\}$
of $[n]$ by placing $i$ and $j$ in the same set $S_r$ if the $(i, j)$ entry of
the matrix is $1$.

This polytope, which we call the \demph{Bell polytope $B_n$}, is a particular case
of the \demph{unipotent polytopes} introduced in \cite{Thiem-FPSAC2017,
Thiem2018}.

\subsubsection{Nonnesting (partition) polytope}
\label{sec:NNP}
The \demph{nonnesting polytope $\NNP_n$} is the stable set polytope of the
comparability graph of the root poset of type $A_n$, which we think of
as $X_n = \{(i,j): 1\le i<j\le n\}$ with the following relation
(NB. this relation is different from the one above):
\begin{equation*}
    (i, j) \leq (k, l)
    \qquad\text{if and only if}\qquad
    k \leq i < j \leq l.
\end{equation*}
As above, the stable sets for the comparability graph of this poset are also
encoded by certain strictly upper triangular matrices with at most one $1$ in
each row and column;
or equivalently, by certain set partitions of $[n]$.
It turns out that we obtain precisely the nonnesting partitions of $[n]$
in this way.
See Example~\ref{ex_uni3}.
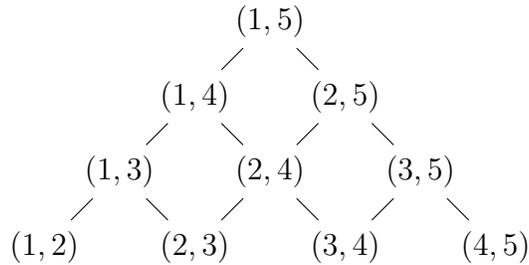
\begin{figure}[htpb]
    \centering
    \begin{tikzpicture}[>=latex,line join=bevel]
        \node (node_0) at (0, 0) [draw,draw=none] {$\left(1, 2\right)$};
        \node (node_1) at (1, 1) [draw,draw=none] {$\left(1, 3\right)$};
        \node (node_2) at (2, 2) [draw,draw=none] {$\left(1, 4\right)$};
        \node (node_3) at (3, 3) [draw,draw=none] {$\left(1, 5\right)$};
        \node (node_4) at (2, 0) [draw,draw=none] {$\left(2, 3\right)$};
        \node (node_5) at (3, 1) [draw,draw=none] {$\left(2, 4\right)$};
        \node (node_6) at (4, 2) [draw,draw=none] {$\left(2, 5\right)$};
        \node (node_7) at (4, 0) [draw,draw=none] {$\left(3, 4\right)$};
        \node (node_8) at (5, 1) [draw,draw=none] {$\left(3, 5\right)$};
        \node (node_9) at (6, 0) [draw,draw=none] {$\left(4, 5\right)$};
        \draw [black] (node_0) to (node_1);
        \draw [black] (node_1) to (node_2);
        \draw [black] (node_2) to (node_3);
        \draw [black] (node_4) to (node_5);
        \draw [black] (node_5) to (node_6);
        \draw [black] (node_7) to (node_8);
        \draw [black] (node_4) to (node_1);
        \draw [black] (node_5) to (node_2);
        \draw [black] (node_6) to (node_3);
        \draw [black] (node_7) to (node_5);
        \draw [black] (node_8) to (node_6);
        \draw [black] (node_9) to (node_8);
    \end{tikzpicture}
    \caption{Root poset of type $A_5$}
\end{figure}

\subsubsection{Noncrossing (partition) polytope}
\label{sec:NCP}
The \demph{noncrossing polytope $\NCP_n$} is the stable set polytope of
the graph on $X_n = \{(i,j): 1\le i<j\le n\}$ with edges
connecting $(i,j)$ and $(k,l)$ if and only if
\begin{equation*}
    \text{$i = k$ and $j \neq l$}
    \quad\text{or}\quad
    \text{$i \neq k$ and $j = l$}
    \quad\text{or}\quad
    \text{$i < k < j < l$}.
\end{equation*}
The stable sets for this graph are also encoded by certain strictly
upper triangular matrices with at most one $1$ in each row and column; or
equivalently, by certain set partitions of $[n]$. It turns out that we obtain
precisely the noncrossing partitions of $[n]$ in this way.

\begin{example}\label{ex_uni3}
    For $n \leq 3$, the Bell polytope $B_n$, the nonnesting polytope $\NNP_n$ and
    the noncrossing polytopes $\NCP_n$ coincide as every set partition of $[3]$
    is noncrossing and nonnesting.
    For example, when $n = 3$ we have the graph $G = (V, E)$, where
    \begin{equation*}
        \begin{aligned}
            V &= \Big\{ (1,2), (1,3), (2,3) \Big\}
            \\
            E &= \Big\{ \{(1,2), (1,3)\}, \{(1,3), (2,3)\} \Big\}
            \\
            \SSP(G) &= \conv\Big\{(0,0,0), (1,0,0), (0,1,0), (0,0,1), (1,0,1)\Big\}.
        \end{aligned}
    \end{equation*}
    \begin{figure}
        \centering
        \begin{tikzpicture}[thick,scale=2]
            \coordinate (A0) at (0,0);
            \coordinate (A1) at (-.8,-.5);
            \coordinate (A2) at (1,0);
            \coordinate (A3) at (0,1);
            \coordinate (A4) at (-.8,.5);

            \begin{scope}[thick,dashed,,opacity=0.6]
            \draw (A1) -- (A0) -- (A2) ;
            \draw (A0) -- (A3);
            \end{scope}
            \node at (A0) {\tiny{
            \begin{tikzpicture}
            \node at (0.15,0.15) {\tiny{0}};
            \node at (.45,0.15) {\tiny{0}};
            \node at (.45,-.15) {\tiny{0}};
            \draw[step=1cm,scale=.3,black,] (0,0) grid (2,1);
            \draw[step=1cm,scale=.3,black,] (1,0) grid (2,-1);
            \end{tikzpicture}
            }};
            \node at (A1) {\tiny{\begin{tikzpicture}
            \node at (0.15,0.15) {\tiny{1}};
            \node at (.45,0.15) {\tiny{0}};
            \node at (.45,-.15) {\tiny{0}};
            \draw[step=1cm,scale=.3,black,] (0,0) grid (2,1);
            \draw[step=1cm,scale=.3,black,] (1,0) grid (2,-1);
            \end{tikzpicture}}};
            \node at (A2) {\tiny{\begin{tikzpicture}
            \node at (0.15,0.15) {\tiny{0}};
            \node at (.45,0.15) {\tiny{1}};
            \node at (.45,-.15) {\tiny{0}};
            \draw[step=1cm,scale=.3,black,] (0,0) grid (2,1);
            \draw[step=1cm,scale=.3,black,] (1,0) grid (2,-1);
            \end{tikzpicture}}};
            \node at (A3) {\tiny{\begin{tikzpicture}
            \node at (0.15,0.15) {\tiny{0}};
            \node at (.45,0.15) {\tiny{0}};
            \node at (.45,-.15) {\tiny{1}};
            \draw[step=1cm,scale=.3,black,] (0,0) grid (2,1);
            \draw[step=1cm,scale=.3,black,] (1,0) grid (2,-1);
            \end{tikzpicture}}};
            \node at (A4) {\tiny{\begin{tikzpicture}
            \node at (0.15,0.15) {\tiny{1}};
            \node at (.45,0.15) {\tiny{0}};
            \node at (.45,-.15) {\tiny{1}};
            \draw[step=1cm,scale=.3,black,] (0,0) grid (2,1);
            \draw[step=1cm,scale=.3,black,] (1,0) grid (2,-1);
            \end{tikzpicture}}};
            \draw[fill=greeo,opacity=0.4] (A2) -- (A4) -- (A3);
            \draw[fill=pur,opacity=0.4] (A1) -- (A4) -- (A2);
            \draw (A1) -- (A2) -- (A3) -- (A4) --cycle;
            \draw (A2) -- (A4);
        \end{tikzpicture}
        \caption{The polytopes $B_n$, $\NNP_n$ and $\NCP_n$ coincide for $n=3$. The
            vertices are labelled by the upper triangular portion of the $3
            \times 3$ strictly upper triangular $0/1$ matrices with at most one
            $1$ in each row and each column.}
    \end{figure}
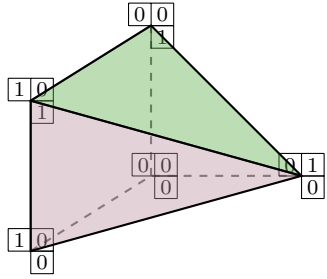
\end{example}

\begin{remark}
    The polytopes $B_n$, $\NNP_n$ and $\NCP_n$ admit
    generalizations to other types of root systems.
    The type $B$ analogues were studied by Aaron Allen \cite{Allen2017}.
    See Section~\ref{facets-Bell-polytopes-B} for more information.
    We will see below that it is related to the Birkhoff polytopes.
\end{remark}

\subsection{Polytopes associated to a matroid}
\label{sec:matroid-independence-polytope}
A \demph{matroid} $M$ on a finite set $X$ is a non-empty collection $\Ind$
of subsets of $X$ satisfying:
\begin{enumerate}[noitemsep, label=(I\arabic*), topsep=2pt]
     \item\label{matroid-axiom-0}
        $\emptyset \in\mathcal I$;
    \item\label{matroid-axiom-1}
        if $A \in \Ind$ and $B \subseteq A$, then $B \in \Ind$; and
    \item\label{matroid-axiom-2}
        if $A, B \in \Ind$ and $|B| > |A|$, then there exists
        $b \in B \setminus A$ such that $A \cup \{b\} \in \Ind$.
\end{enumerate}

 We see from \ref{matroid-axiom-1} that $\Ind$ is a simplicial complex.

\subsubsection{Matroid independence polytope}
The elements of $\Ind$ are called the \demph{independent sets} of $M$.
The \demph{matroid independence polytope} of $M$ is $\Poly_{\Ind}$.
This family of polytopes was introduced by Edmonds in \cite{Edmonds1970} where
he also described the facet inequalities.

The independent sets of a relation $\R$ on $X$ satisfy \ref{matroid-axiom-1}, but not
necessarily \ref{matroid-axiom-2}. When a relation $\R$ satisfies both
\ref{matroid-axiom-1} and \ref{matroid-axiom-2}, the polytope $\IP(\R)$
defined in Section~\ref{sec:IP(R)}
coincides with the matroid independence polytope of a matroid. In this case,
results about matroid polytopes can be used to describe various aspects of
$\IP(\R)$.

\subsubsection{Matroid basis polytope}
\label{sec:matroid-basis-polytope}
The \demph{bases} of a matroid $M$ are the independent sets of $M$ that are
maximal with respect to inclusion. Let \demph{$\BP(M)$} be the polytope whose
vertices are the indicator vectors for the bases of $M$. By
\ref{matroid-axiom-2}, all
bases of $M$ have the same cardinality, which is called the \demph{rank} of $M$.
Note that $\BP(M)$ is the facet of the independent set polytope of $M$
supported by the hyperplane of vectors whose coordinates sum to the rank of
$M$.

In Section~\ref{sec:birkhoff}, we consider a generalization of this
construction: the face of $\SSP(G)$ supported by the hyperplane of vectors
whose coordinates sum to the maximal cardinality of a stable set of $G$.
This includes the Birkhoff polytopes as a special case.

\section{The $1$-skeleton of stable set polytopes}

Recall that the \demph{$1$-skeleton} of a polytope $P$ is the graph whose
vertices correspond to the $0$-dimensional faces of $P$; and there is an
edge connecting two vertices of the graph if and only if they are the vertices of
a $1$-dimensional face of $P$.
One of our main results is the following description of the $1$-skeleton of
the stable set polytope of a graph $G$.

Recall that $\Stab(G)$ denotes the stable sets of $G$.
\begin{theorem}
    \label{RP1skeleton}
    Let $\SSP(G)$ be the stable set polytope of a finite simple graph $G = (V, E)$.
    \begin{enumerate}[noitemsep]
        \item\label{RP1skeleton-vertices}
            The vertex set of $\SSP(G)$ is $\{e_A : A \in \Stab(G) \}$.

        \item\label{RP1skeleton-edges}
            Two distinct vertices $e_A$ and $e_B$ form an edge in $\SSP(G)$ if and only if
            for all $C, D \in \Stab(G)$, we have
            \begin{equation*}
                \text{$e_A + e_B = e_C + e_D$ implies $\{A, B\} = \{C, D\}$.}
            \end{equation*}
    \end{enumerate}
\end{theorem}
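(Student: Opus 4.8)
The plan is to prove the two directions of the ``iff'' in part~\eqref{RP1skeleton-edges} using the standard fact that, in a $0/1$-polytope, $e_A$ and $e_B$ span an edge if and only if the segment $[e_A, e_B]$ is not properly contained in the convex hull of the remaining vertices together with its endpoints; more precisely, $[e_A,e_B]$ is an edge iff its midpoint $\tfrac12(e_A+e_B)$ cannot be written as a convex combination of vertices other than in the trivial way. I would state this as a preliminary observation: a segment $[v_1,v_2]$ between two vertices of a polytope $P = \conv\{v_i\}$ is an edge iff whenever $\tfrac12(v_1+v_2) = \sum_i \lambda_i v_i$ with $\lambda_i \ge 0$, $\sum_i\lambda_i = 1$, the support of $(\lambda_i)$ is contained in $\{1,2\}$. (One direction is because an edge is a face, hence any convex representation of a point on it uses only vertices of that face; the converse is the easy separation argument.) For $0/1$-polytopes this can be sharpened, and that sharpening is what makes the ``set partition'' statement clean.

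The key combinatorial input is the following claim about $0/1$-vectors: if $e_A + e_B = \sum_{i} \lambda_i e_{C_i}$ is a convex combination of distinct indicator vectors $e_{C_i}$ of stable sets with all $\lambda_i > 0$, then in fact $e_A + e_B = e_{C_i} + e_{C_j}$ for \emph{some} $i,j$ (not necessarily distinct). I would prove this by a coordinate/extremality argument: each coordinate of $e_A+e_B$ lies in $\{0,1,2\}$. On a coordinate where the value is $0$, every $e_{C_i}$ must have a $0$; on a coordinate where it is $2$, every $e_{C_i}$ must have a $1$. So the $e_{C_i}$ differ only on the coordinates where $e_A+e_B$ equals $1$, call this set $S = A \symdiff B$; and on $S$, $\sum_i \lambda_i e_{C_i}$ restricted to $S$ equals the all-$\tfrac12$\dots no: it equals the all-ones vector scaled so each coordinate has total weight $1$. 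Writing $c_i$ for the restriction of $C_i$ to $S$, we get $\sum_i \lambda_i e_{c_i} = e_S$ with $e_{c_i} \in \{0,1\}^S$; since $e_S$ is a vertex of the cube $[0,1]^S$, it cannot be a nontrivial convex combination, which forces every $c_i = S$ — contradiction unless there is only one term, which is absurd since $e_A+e_B$ is not $2 e_{C_i}$ for a single index in general. The correct conclusion is subtler: one instead argues that the multiset $\{C_i\}$ with multiplicities pairs up, because the ``doubled'' structure of $e_A+e_B$ forces the convex combination to split into two halves each summing (with weight $\tfrac12$) to a common $0/1$-vector — so $\{e_{C_i}\}$ has at most two distinct elements and they must be complementary on $S$, hence $e_{C_i} + e_{C_j} = e_A + e_B$. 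I will reorganize this into: (a) every $C_i$ agrees with $A$ (equivalently $B$) off $A\cup B$ and off $A\cap B$; (b) on $S = A\symdiff B$, the stable sets $C_i \cap S$ form an ``antipodal'' family inside the cube $2^S$; and (c) conclude the two-term identity.

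From the sharpened claim, both directions follow quickly. If $e_A, e_B$ form an edge and $e_A+e_B = e_C+e_D$ for stable $C,D$, then $\tfrac12(e_A+e_B) = \tfrac12 e_C + \tfrac12 e_D$ exhibits the midpoint as a convex combination of vertices on the edge, so $\{C,D\}\subseteq\{A,B\}$, and since the coordinate sums match ($|A|+|B| = |C|+|D|$) we get $\{A,B\} = \{C,D\}$. Conversely, suppose the implication holds for all stable $C,D$; if $[e_A,e_B]$ were not an edge, then by the preliminary observation and the sharpened claim there exist stable $C,D$ with $e_A+e_B = e_C+e_D$ and $\{C,D\}\neq\{A,B\}$, contradicting the hypothesis. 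Part~\eqref{RP1skeleton-vertices} is immediate since all the $e_A$ are $0/1$-vectors, hence vertices of the cube, hence a fortiori vertices of the sub-polytope $\SSP(G)$. The main obstacle I anticipate is making step (b)--(c) of the claim fully rigorous — i.e., showing that a convex combination of $0/1$-points equal to a ``$\{0,1,2\}$-valued point with the $1$'s on $S$'' must literally be a weighted average of exactly two antipodal-on-$S$ points — rather than just three or more points that happen to average correctly; the resolution is that on $S$ we need $\sum_i \lambda_i (e_{C_i}\cap S) $ to equal $\tfrac12 e_S$ after renormalizing the ``$A$-like'' and ``$B$-like'' halves, and a vertex-of-cube argument applied separately to the two halves pins everything down.
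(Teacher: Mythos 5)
Your midpoint criterion for edges and your easy direction are fine (they are equivalent to the paper's Lemma~\ref{lem:edge}), and part~\eqref{RP1skeleton-vertices} is handled exactly as in the paper. The genuine gap is the ``key combinatorial claim'' that drives your hard direction. Your argument for it never uses the hypothesis that the $C_i$ are stable sets of a graph: you try to deduce, from the identity $\tfrac12(e_A+e_B)=\sum_i\lambda_i e_{C_i}$ of $0/1$-vectors alone, that the $C_i$ take at most two distinct values which are complementary on $S=A\symdiff B$. That intermediate assertion is simply false: with $A=\{1,2,3\}$, $B=\emptyset$ one has
\begin{equation*}
    \tfrac12\, e_{\{1,2,3\}}
    = \tfrac16\bigl(e_{\{1\}}+e_{\{2\}}+e_{\{3\}}+e_{\{1,2\}}+e_{\{2,3\}}+e_{\{1,3\}}\bigr),
\end{equation*}
a positive convex combination with six distinct $C_i$, so no ``antipodal splitting into two halves'' of a given combination is forced. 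Worse, the claim itself (non-adjacency forces a two-term decomposition $e_A+e_B=e_C+e_D$ with $\{C,D\}\neq\{A,B\}$) cannot be established by any purely $0/1$-vector argument: the paper's Remark~\ref{remark:maximal-sets-polytope-graph} gives a $0/1$-polytope, the convex hull of the inclusion-maximal stable sets of a $9$-vertex graph, in which $e_A$ and $e_B$ are not adjacent (a point of the open segment, and hence by the face argument the midpoint too, is a positive combination of other vertices), yet the only way to write $e_A+e_B$ as a sum of two vertices is $e_A+e_B$ itself. So your key claim, proved in the way you propose, would contradict that example; any correct proof must exploit the specific structure of $\Stab(G)$.

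That is exactly what the paper's proof does and what is missing from your proposal. Starting from $e_A-e_B=\sum_i\gamma_i(e_{C_i}-e_B)$ and the sandwich property $A\cap B\subseteq C_i\subseteq A\cup B$ (your step (a), which is correct), the paper uses downward closure of $\Stab(G)$ to dispose of the cases $A\setminus B=\emptyset$ or $B\setminus A=\emptyset$ (via sets such as $B\setminus\{x\}$ and $A\cup\{x\}$), and in the main case introduces, for each $x\in A\setminus B$, the neighborhood set $B'_x=\{b\in B\setminus A:\ b \text{ adjacent to } x\}$. A coefficient computation against Equation~\eqref{eq:face1} shows each $C_i$ contains $B'_x$ exactly when it omits $x$, and from this the paper builds two \emph{new} stable sets $C$ and $D$ (stability is checked using the adjacency structure of $G$) with $e_C+e_D=e_A+e_B$ and $\{C,D\}\neq\{A,B\}$. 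Some construction of this kind, using both downward closure and the graph's edges, is indispensable; without it your hard direction does not go through.
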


\begin{proof}[Proof of Theorem~\ref{RP1skeleton}, Part~\eqref{RP1skeleton-vertices}]
    Note that $e_A$ is not a nontrivial convex combination of the other $e_B$,
    for otherwise we would have a nontrivial convex combination of the
    vertices of the $|X|$-cube (since each $e_A$ is a vertex of the $|X|$-cube).
    Hence, $e_A$ is a vertex of $\SSP(G)$.
\end{proof}

The proof of part~\eqref{RP1skeleton-edges} of Theorem~\ref{RP1skeleton}
will make use of the following characterization of the edges of a polytope.
To our knowledge this characterization has not appeared in the literature.

\begin{lemma}\label{lem:edge}
    Two distinct vertices $a$ and $b$ of a polytope $P$ are \emph{not} the
    vertices of an edge of $P$ if and only if there exist $k \geq 1$ vertices $v_1, \dots, v_k$
    of $P$, distinct from $a, b$, and coefficients $\gamma_1, \dots,
    \gamma_k > 0$ such that
    \begin{equation*}
        a-b = \sum_{i=1}^k \gamma_i(v_i-b).
    \end{equation*}
\end{lemma}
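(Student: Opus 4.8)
The plan is to prove both directions via a standard argument about faces of polytopes. Recall that $a$ and $b$ span an edge of $P$ iff the segment $[a,b]$ is a face, i.e.\ iff there is a linear functional $\langle c, \cdot\rangle$ on the ambient space that is maximized on $P$ exactly along $[a,b]$. I would phrase the whole proof in terms of the cone generated by the vectors $v - b$ as $v$ ranges over the vertices of $P$ other than $b$; call it $K$. The key observation is that $P - b$ is the convex hull of $\{0\}\cup\{v-b : v \text{ a vertex},\ v\neq b\}$, so $a - b$ lies in $K$, and $a-b$ lies on an \emph{extreme ray} of $K$ precisely when $[a,b]$ is an edge of $P$ (this is the local cone / vertex figure picture at $b$). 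Thus the lemma is just the statement: $a-b$ is \emph{not} on an extreme ray of $K$ iff it is a positive combination of other generators $v_i - b$ of $K$, with the $v_i$ distinct from both $a$ and $b$.

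For the ``if'' direction, suppose $a - b = \sum_{i=1}^k \gamma_i (v_i - b)$ with all $\gamma_i > 0$ and all $v_i \notin \{a,b\}$. I would argue by contradiction: if $[a,b]$ were an edge, pick a functional $c$ with $\langle c, x\rangle \le 0$ for all $x \in P - b$ and equality exactly on $[a,b]-b = [0, a-b]$. Applying $\langle c, \cdot\rangle$ to the displayed identity gives $0 = \langle c, a-b\rangle = \sum \gamma_i \langle c, v_i - b\rangle$. Since each $\gamma_i > 0$ and each $\langle c, v_i-b\rangle \le 0$, every term vanishes, so each $v_i - b$ lies on the face $[0,a-b]$; being a vertex difference, $v_i - b$ must be $0$ or a positive multiple reaching $a-b$, forcing $v_i \in \{a,b\}$, a contradiction. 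Hence $[a,b]$ is not an edge.

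For the ``only if'' direction, suppose $[a,b]$ is not an edge. Then in the cone $K = \operatorname{cone}\{v-b : v \text{ vertex},\ v \neq b\}$, the ray through $a-b$ is not extreme, so $a - b$ can be written as $\sum_{j} \delta_j (w_j - b)$ with $\delta_j > 0$ and $w_j$ vertices distinct from $b$; the only remaining issue is to remove any occurrence of $w_j = a$. If $a$ appears with coefficient $\delta$: if $\delta \geq 1$ we would get $(\delta - 1)(a-b) + \sum_{j:\,w_j\neq a}\delta_j(w_j-b) = 0$ with nonnegative coefficients not all zero, which combined with a separating functional for $\{b\}$ (strictly positive on all $v - b$) is impossible; so $0 < \delta < 1$, and then $a-b = \tfrac{1}{1-\delta}\sum_{j:\,w_j \neq a}\delta_j(w_j - b)$ expresses $a-b$ using only vertices different from $a$ and $b$, with positive coefficients. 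Setting $k$ to be the number of such terms (and noting $k \geq 1$ since $a \neq b$) finishes the proof.

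The main obstacle is the bookkeeping around the generator $a - b$ itself: one must be careful that Carathéodory/Minkowski-type statements about extreme rays are applied to the right cone, and that the coefficient of $a$ in any representation can genuinely be pushed below $1$ and then cleared. The rest is routine once the ``local cone at $b$'' viewpoint is set up, and it only uses that $P$ has finitely many vertices and that each vertex difference from $b$ is a genuine extreme generator direction of $P - b$ at $0$ after translation.
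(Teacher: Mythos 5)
Your argument is correct in outline, and in the ``only if'' direction it takes a genuinely different route from the paper. For the ``if'' direction you do essentially what the paper does: apply a supporting linear functional to the identity $a-b=\sum_i\gamma_i(v_i-b)$ and use positivity of the $\gamma_i$ to force each $v_i$ onto the relevant face (the paper works with the smallest face containing $a$ and $b$ instead of assuming $[a,b]$ is an edge and deriving a contradiction, but the mechanism is identical). For the converse, the paper stays inside the polytope: it takes the smallest face $F$ containing $a$ and $b$, notes that $\tfrac12(a+b)$ lies in the relative interior of $F$, writes that midpoint as a strictly positive convex combination of \emph{all} vertices of $F$, and rearranges to get the desired expression; this is short and entirely self-contained, which is the point of including the proof at all. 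Your route instead goes through the tangent cone $K=\operatorname{cone}\{v-b : v \text{ a vertex}\}$ at $b$ and the equivalence ``$[a,b]$ is an edge iff $a-b$ spans an extreme ray of $K$''; that equivalence is true and standard, but it is essentially the content of the lemma, so in a self-contained write-up it would itself need proof. One step needs tightening: as literally stated, ``the ray through $a-b$ is not extreme, so $a-b=\sum_j\delta_j(w_j-b)$ with $w_j\neq b$'' is vacuous, because the trivial representation $a-b=1\cdot(a-b)$ always exists, and in that case your cleanup (``$\delta\ge 1$ gives a nonnegative dependence with coefficients not all zero'') has all remaining coefficients equal to zero and produces no contradiction. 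What non-extremality really gives you---for instance via Minkowski's theorem that the pointed cone $K$ (pointed because $b$ is a vertex) is generated by its extreme rays, none of which is parallel to $a-b$, and noting that $a$ is the only vertex whose direction from $b$ can be parallel to $a-b$ since vertices cannot be collinear---is a representation supported off the ray of $a-b$; with that in hand the removal of $a$ is automatic and your cleanup is not needed. So: your approach buys the explicit ``local cone at $b$'' geometry, at the cost of importing (or proving) the face--cone correspondence and Minkowski's theorem, whereas the paper's midpoint-in-the-relative-interior argument buys brevity and self-containedness.
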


\begin{proof}
    Suppose $a - b = \sum_{i=1}^k \gamma_i(v_i - b)$ with $\gamma_1, \dots,
    \gamma_k > 0$, where $v_1, \dots, v_k$ are $k \geq 1$ vertices of $P$ that
    are distinct from $a$ and $b$.
    Let $F$ denote the smallest face of $P$ containing $a$ and $b$,
    and let
    $H = \{u \in \RR^d: \langle u, c \rangle = c_0 \}$
    be a supporting hyperplane of $F$ satisfying
    $P \subseteq \{ u \in \RR^d : \langle u, c \rangle \geq c_0 \}$.
    Hence, for any vertex $v$ of $P$, we have $\langle v, c \rangle \geq c_0$,
    with equality if and only if $v \in F$.
    Thus,
    \begin{equation*}
        \langle a, c \rangle = c_0 = \langle b, c \rangle
        \qquad\text{and}\qquad
        \langle v - b, c \rangle \geq c_0 - c_0 = 0,
    \end{equation*}
    Since $a - b = \sum_{i=1}^k \gamma_i(v_i - b)$, we have
    \begin{equation*}
        0 = \langle a - b, c \rangle
        = \sum_{i=1}^k \gamma_i \langle v_i - b, c \rangle.
    \end{equation*}
    Since $\langle v_i - b, c \rangle \geq 0$ and $\gamma_i > 0$
    for all $i \in [k]$, it follows that $\langle v_i - b, c \rangle = 0$
    for all $i \in [k]$.
    Thus, $v_1, \ldots, v_k$ also belong to $F$.
    Since $F$ is the smallest face containing $a$ and $b$, it
    follows that $a$ and $b$ are not the vertices of an edge of $P$
    since $F$ also contains $v_1$.

    Suppose $a$ and $b$ are not the vertices of an edge of $P$
    and let $F$ denote the smallest face of $P$ containing $a$ and $b$.
    Denote the vertices of $F$ by $a, b, v_1, \ldots, v_k$ with $k \geq 1$
    (if $k = 0$, then the only vertices of $F$ are $a$ and
    $b$, a contradiction to $F$ not being an edge).
    Since $\frac{1}{2}\left(a + b\right)$
    belongs to the relative interior of
    $F$, there exist $\lambda_a, \lambda_b, \lambda_1, \ldots, \lambda_k > 0$
    such that
    $\lambda_a + \lambda_b + \lambda_1 + \cdots + \lambda_k = 1$
    and
    \begin{equation*}
        \frac{1}{2}\left(a + b\right)
        = \lambda_a a + \lambda_b b + \lambda_1 v_1 + \cdots + \lambda_k v_k.
    \end{equation*}
    Since $k \geq 1$, we cannot have $\lambda_a \geq 1/2$ and $\lambda_b \geq
    1/2$.
    If $\lambda_a < 1/2$, then $0 < 1 - 2 \lambda_a$ and so
    \begin{equation*}
            a - b = \frac{2 \lambda_1}{(1 - 2 \lambda_a)} (v_1 - b) + \cdots + \frac{2 \lambda_k}{(1 - 2 \lambda_a)} (v_k - b).
    \end{equation*}
    Set $\gamma_i = \frac{2 \lambda_i}{(1 - 2 \lambda_a)}$.
    If $\lambda_a \ge 1/2$, then $\lambda_b < 1/2$, and so we can swap the roles
    of $a$ and $b$.
\end{proof}

We now apply
the following lemma that applies for any $0/1$-polytope.
\begin{lemma}
    \label{sandwich-lemma}
Let $\mathcal C$ be any set of subsets of $X$.
    If $e_A, e_B, e_{C_1}, \ldots, e_{C_k}$ are distinct vertices
of $\Poly_{\mathcal C}$
    and $e_A - e_B = \sum_{i=1}^k \gamma_i(e_{C_i} - e_B)$
    with $\gamma_1, \dots, \gamma_k > 0$, then
    $A \cap B\subseteq C_i \subseteq A\cup B$
    for all $i \in [k]$.
\end{lemma}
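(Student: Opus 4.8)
The plan is to extract coordinate-wise information from the vector identity $e_A - e_B = \sum_{i=1}^k \gamma_i(e_{C_i} - e_B)$ by pairing both sides with the standard basis vectors $e_x$, and to exploit that every $e_{C_i}$ is a $0/1$-vector together with the sign constraints $\gamma_i > 0$. Fix a vertex $x \in V$ and evaluate $\langle \cdot, e_x\rangle$ on both sides. By the inner-product formulas recorded in the Indicator Vectors subsection, the left-hand side equals $\langle e_A - e_B, e_x\rangle$, which is $0$ if $x \in A \cap B$ or $x \notin A \cup B$, is $+1$ if $x \in A \setminus B$, and is $-1$ if $x \in B \setminus A$. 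The right-hand side equals $\sum_{i=1}^k \gamma_i\langle e_{C_i} - e_B, e_x\rangle$, and each summand $\langle e_{C_i} - e_B, e_x\rangle$ lies in $\{-1, 0, 1\}$, taking the value $+1$ exactly when $x \in C_i \setminus B$ and $-1$ exactly when $x \in B \setminus C_i$.

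Now I would argue in the two relevant cases. First suppose $x \in A \cap B$; then the left-hand side is $0$, so $\sum_{i} \gamma_i\langle e_{C_i} - e_B, e_x\rangle = 0$. Since $x \in B$, each term $\langle e_{C_i} - e_B, e_x\rangle$ is either $0$ (when $x \in C_i$) or $-1$ (when $x \notin C_i$); as all $\gamma_i > 0$, the sum of these nonpositive terms vanishes only if every term is $0$, i.e.\ $x \in C_i$ for all $i$. Hence $A \cap B \subseteq C_i$. Symmetrically, suppose $x \notin A \cup B$; then the left-hand side is again $0$, and since $x \notin B$ each term $\langle e_{C_i} - e_B, e_x\rangle$ is either $0$ (when $x \notin C_i$) or $+1$ (when $x \in C_i$); the sum of these nonnegative terms is $0$ only if each is $0$, forcing $x \notin C_i$ for all $i$. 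Thus $C_i \subseteq A \cup B$ for all $i$, which together with the previous paragraph gives $A \cap B \subseteq C_i \subseteq A \cup B$ for every $i \in [k]$.

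I do not anticipate a genuine obstacle here: the only thing to be careful about is bookkeeping the sign of each contribution $\langle e_{C_i} - e_B, e_x\rangle$ according to whether $x$ lies in $B$ or not, and then invoking the elementary fact that a sum of terms all of one sign, with strictly positive weights, vanishes only when every term is zero. The hypotheses that the vertices are distinct and that $\gamma_i > 0$ are used precisely at this last step; distinctness itself is not otherwise needed for the stated conclusion. One could also phrase the whole argument via the single observation that $e_A + \sum_i \gamma_i e_B = e_B + \sum_i \gamma_i e_{C_i}$ has all coordinates nonnegative and then compare, but the direct coordinate computation above is the cleanest route.
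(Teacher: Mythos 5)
Your argument is correct and is essentially the paper's own proof: both pair the identity with $e_x$ for $x \in A \cap B$ and $x \notin A \cup B$, and use that a sum of one-signed terms with strictly positive weights $\gamma_i$ vanishes only if every term does. Your remark that distinctness of the vertices is not actually needed is accurate as well.
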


\begin{proof}
    Suppose $x \in A \cap B$. Then
    $
        0
        = \langle e_A - e_B, e_x \rangle
        = \sum_{i=1}^k \gamma_i \left(\langle e_{C_i}, e_x \rangle - 1\right),
    $
    which implies $\langle e_{C_i}, e_x \rangle = 1$ for all $i \in [k]$,
    since $\langle e_{C_i}, e_x \rangle - 1 \leq 0$ and $\gamma_i > 0$.
    Hence, $x \in C_i$ for all $i\in [k]$.

    To prove $C_i \subseteq A \cup B$, suppose $x \notin A \cup B$.
    Since
    $ 
        0
        = \langle e_A - e_B, e_x \rangle
        = \sum_{i=1}^k \gamma_i \langle e_{C_i}, e_x \rangle,
    $ 
    each $\langle e_{C_i}, e_x \rangle \geq 0$,
    and $\gamma_1, \ldots, \gamma_k > 0$,
    it follows that $\langle e_{C_i}, e_x \rangle = 0$ for all $i \in [k]$.
\end{proof}

\begin{proof}[Proof of Theorem~\ref{RP1skeleton}, Part~\eqref{RP1skeleton-edges}]
    If there exist $C,D \in \Stab(G)$ with
    $e_A+e_B=e_C+e_D$, then
    \begin{equation*}
        e_A-e_B=e_C+e_D-2e_B=(e_C-e_B)+(e_D-e_B).
    \end{equation*}
    Thus, if $\{C, D\} \neq \{A, B\}$, then Lemma~\ref{lem:edge} implies
    $\{e_A,e_B\}$ is not an edge of $\SSP(G)$.

    To prove the converse, argue by contradiction.
    Suppose $\{e_A,e_B\}$ is not an edge and
    suppose the following hypothesis holds:
    \begin{enumerate}[label=(H), topsep=1em]
        \item
            \label{hypothesis}
            there do not exist $C, D$ in $\Stab(G)$ such that $C\neq D$,
            $\{A, B\} \neq \{C, D\}$ and $e_A+e_B=e_C+e_D$.
    \end{enumerate}
    By Lemma~\ref{lem:edge}, there exist $e_{C_1}, \ldots, e_{C_k}$
    different from $e_A$ and $e_B$ and $\gamma_1, \dots, \gamma_k > 0$ such
    that
    \begin{equation}
        \label{eq:face1}
        e_A-e_B = \sum_{i=1}^k \gamma_i(e_{C_i} - e_B).
    \end{equation}
    By Lemma~\ref{sandwich-lemma}, we have, for all $i \in [k]$,
    \begin{equation}\label{eq:TheCi}
        A \cap B \subseteq C_i \subseteq A \cup B.
    \end{equation}
    Let $A'=A \setminus B$ and $B' = B \setminus A$.

    \bigskip
        \noindent
        {\bf Claim:}
             \b{We may assume that  $A' \neq \emptyset$ and $B' \neq \emptyset$:}
              If $A'=\emptyset$, then $A\subset B$. From the fact that $A\ne B$ we can pick $x\in B'$ and we have
            \begin{equation*}
                e_{A} + e_{B}
                = e_{A \cup \{x\}} + e_{B \setminus \{x\}}.
            \end{equation*}
        Since subsets of stable sets are stable, we have that
            $A \cup \{x\}, B \setminus \{x\}\subseteq B \in \Stab(G)$.
            Hence,
            $e_{A} + e_{B}  = e_{A \cup \{x\}} + e_{B \setminus \{x\}}$
            contradicts \ref{hypothesis} unless
            $\{A, B\} = \{A \cup \{x\}, B \setminus \{x\}\}$,
            and thus $B = A \cup \{x\}$. Using this information in Equation~\eqref{eq:TheCi} we obtain
            $$ A= A\cap B \subseteq C_i \subseteq A\cup B= B= A\cup\{x\}.$$
            This implies that $C_i=A$ or $C_i=A\cup\{x\}=B$, a contradiction to the choices of $C_i$ in \eqref{eq:face1}.
            Therefore we must have $A'\neq \emptyset$. The argument for $B' \neq \emptyset$ is similar.

       Given that $A' \neq \emptyset$ and $B' \neq \emptyset$, for each $x\in A'$, let
            \begin{equation*}
                B'_x = \{ b' \in B' : \text{$x$ and $b'$ are adjacent in $G$} \}.
            \end{equation*}
            We divide the rest of the proof into a series of steps.

            \begin{enumerate}[label=\textbf{(\alph*)}, wide, labelindent=0pt, itemsep=12pt, topsep=12pt]
                \item
                    \label{5a} First, we prove that \b{$B'_x \neq \emptyset$.}
                    Suppose $B'_x = \emptyset$. Then
                    $B \cup \{x\} \in \Stab(G)$, because:
                    \begin{itemize}[noitemsep, topsep=0pt, label={--}]
                        \item
                            $b$ and $b'$ are not adjacent
                            for distinct $b, b' \in B$, since $B \in \Stab(G)$;

                        \item
                            $x$ and $b$ are not adjacent
                            for $b \in B \setminus B' = A \cap B$,
                            since $x, b \in A$ and $A \in \Stab(G)$;

                        \item
                            $x$ and $b'$ are not adjacent
                            for $b' \in B'$, since $B'_x = \emptyset$.
                    \end{itemize}
                    Also, $e_A, e_B, e_{A \setminus \{x\}}, e_{B \cup \{x\}}$
                    are distinct: otherwise, $B = A \setminus \{x\}$,
                    contradicting the assumption that $B' \neq \emptyset$.
                    But then
                    $e_A + e_B = e_{A \setminus \{x\}} + e_{B \cup \{x\}}$
                    contradicts \ref{hypothesis}.

                \item
                    \label{5b} Next, we prove that \b{$B' = \bigcup_{x\in A'} B'_x.$}
                    Suppose there exists
                    $b' \in B' \setminus \bigcup_{x\in A'} B'_x$.
                    Then $A \cup \{b'\} \in \Stab(G)$, since:
                    \begin{itemize}[noitemsep, topsep=0pt, label={--}]
                        \item
                            $a$ and $a'$ are not adjacent
                            for distinct $a, a' \in A$, since
                            $A \in \Stab(G)$;

                        \item
                            $a$ and $b'$ are not adjacent
                            for $a \in A \setminus A' = A \cap B$,
                            since $a, b' \in B$ and $B \in \Stab(G)$;

                        \item
                            $a'$ and $b'$ are not adjacent
                            for $a' \in A'$,
                            since $b' \notin B'_{a'}$.
                    \end{itemize}
                    Also, $e_A, e_B, e_{A \cup \{b'\}}, e_{B \setminus \{b'\}}$
                    are distinct: otherwise, $A = B \setminus \{b'\}$,
                    contradicting the assumption that $A' \neq \emptyset$.
                    But then
                    $e_A + e_B = e_{A \cup \{b'\}} + e_{B \setminus \{b'\}}$
                    contradicts \ref{hypothesis}.

                \item
                    \label{5c} We prove that for each $x \in A'$ and each $C_i$ in \eqref{eq:face1}, we have
                    $$\b{\big( x \in C_i \text{ and }  B'_x \cap C_i = \emptyset\big)
                      \text{ or } \big(x \notin C_i \text{ and }B'_x \subseteq C_i\big). }$$
               By definition, $b \in B'_x$ if and only if $x$ and $b$ are adjacent in $G$.
                    Hence, $b$ and $x$ cannot both belong to the same stable set.
                    So, if $x \in C_i$, then $b \notin C_i$ for all $b \in B'_x$;
                    that is, $B'_x \cap C_i = \emptyset$.

                    Let $x \in A'$.
                    Then $x \notin B$ and so
                    by Equation~\eqref{eq:face1},
                    \begin{equation}
                        \label{eq:rel1}
                        1
                        = \langle e_A - e_B, e_x \rangle
                        = \sum_{i=1}^k \gamma_i \langle e_{C_i} - e_{B}, e_x \rangle
                        = \sum_{i=1}^k \gamma_i \langle e_{C_i}, e_x \rangle
                        = \sum_{\substack{1 \leq i \leq k \\ x \in C_i}} \gamma_i.
                    \end{equation}
                    For $b \in B'_x$,
                    Equations~\eqref{eq:face1} and \eqref{eq:rel1},
                    together with the fact that $x \in C_i$ implies
                    $b \notin C_i$,
                    \begin{equation}\label{eq:coeffb}
                        \begin{aligned}
                            -1
                            &= \langle e_A - e_B, e_b \rangle
                            =
                            \sum_{\substack{1 \leq i \leq k \\ x \in C_i}}
                                \gamma_i \langle e_{C_i} - e_B, e_b \rangle
                            +
                            \sum_{\substack{1 \leq i \leq k \\ x \notin C_i}}
                                \gamma_i \langle e_{C_i} - e_B, e_b \rangle
                            \\
                            &=
                            \sum_{\substack{1 \leq i \leq k \\ x \in C_i}} -\gamma_i
                            +
                            \sum_{\substack{1 \leq i \leq k \\ x \notin C_i}}
                            \gamma_i \left(\langle e_{C_i}, e_b \rangle - 1 \right)
                            =
                            \quad{-1}
                            + \sum_{\substack{1 \leq i \leq k \\ x \notin C_i}}
                            \gamma_i \left(\langle e_{C_i}, e_b \rangle - 1 \right).
                        \end{aligned}
                    \end{equation}
                    Since each $\gamma_i > 0$, it follows that
                    $b \in C_i$ for all $i$ such that $x \notin C_i$.
                    Hence, $B'_x \subseteq C_i$ for all $1 \leq i \leq k$ such
                    that $x \notin C_i$.

                \item  \label{5d}
                For each $1 \leq i \leq k$, we have
                $$\b{                        C_i =
                        \big(A \cap B\big)
                        \cup
                        \big(A'\cap C_i)
                        \cup
                        \Big( \bigcup_{y\in A' \setminus C_i}B'_{y} \Big).
                 }
               $$
                 Fix $i$ and use \ref{5b} to write
                    \begin{equation}\label{eq:Bdecomposition}
                        B' = \big(\bigcup_{x\in A\cap C_i} B'_{x}\big)
                        \cup \big( \bigcup_{y\in A\setminus C_i} B'_{y}\big)
                    \end{equation}
                 From \ref{5c}, if  $x\in A\cap C_i$, then $B'_{x} \cap C_i = \emptyset$, and if $y\in A\setminus C_i$, then $B'_{y_t} \subseteq C_i$.
                 Hence $B'\cap C_i =  \bigcup_{y\in A' \setminus C_i}B'_{y}$.
                 Using this and Equation~\eqref{eq:TheCi},
                    we obtain the desired results:
                    \begin{equation*}
                     \begin{aligned}
                       C_i&= (A\cup B)\cap C_i = \big((A\cap B)\cup A' \cup B'\big)\cap C_i
                             = (A\cap B) \cup (A'\cap C_i) \cup (B' \cap C_i)\\
                             &= (A\cap B) \cup (A'\cap C_i) \cup \big(\bigcup_{y\in A' \setminus C_i}B'_{y}\big).
                     \end{aligned}
                    \end{equation*}

                \item For $1 \leq i \leq k$, we have \b{$A'\cap C_i\ne \emptyset$} and \b{$A'\setminus C_i\ne \emptyset$}.
                    \label{5e}

           If $A'\setminus C_i= \emptyset$, then $A'\subseteq C_i$ and, using \ref{5d}, we have $C_i = (A \cap B) \cup (A'\cap C_i)= (A \cap B) \cup A' = A$,
                    contradicting that $C_i$ and $A$ are distinct.
                    Similarly, if $A'\cap C_i= \emptyset$, then   from \ref{5d} and \eqref{eq:Bdecomposition} we have $C_i = (A \cap B) \cup \bigcup_{y\in A' \setminus C_i}B'_{y}= (A \cap B) \cup B' = B$,
                    contradicting that $C_i$ and $B$ are distinct.

                \item
                    \label{5f}

                   For each $i$ such that $1 \leq i \leq k$,
                   the following two sets are stable:
                    \begin{equation*}
                      \b{  \begin{aligned}
                            C &= \big(A \cap B\big) \cup (A'\cap C_i) \cup  \big(\bigcup_{y\in A' \setminus C_i}B'_{y}\big)
                            \\
                            D &= \big(A \cap B\big) \cup (A' \setminus C_i) \cup  \big(\bigcup_{x\in A'\cap C_i}B'_{x}\big)\,.
                        \end{aligned}}
                    \end{equation*}
                    From \ref{5d} the set $C=C_i$ is stable by choice of $C_i$. To show that $D$ is stable, let
                    $u$ and $v$ be distinct elements of $D$.

                    \begin{itemize}[noitemsep, topsep=0pt, label={--}]
                        \item
                            Since $(A \cap B) \cup (A' \setminus C_i) \subseteq A$
                            and $(A \cap B) \cup \big(\bigcup_{x\in A'\cap C_i}B'_{x}\big) \subseteq B$
                            are subsets of stable sets, we have
                            that $u$ and $v$ are not adjacent
                            if $u$ and $v$ both belong to any one of these
                            sets.

                        \item
                            Assume that $u \in A' \setminus C_i$ and  $v \in B'_{x}$,
                            for some $x\in A'\cap C_i$. If $u$ and $v$ are adjacent,
                            then $v \in B'_{u}$.
                            But $B'_{u} \cap B_{x} = \emptyset$
                            since, from \ref{5c}, $B'_{u} \subseteq C_i$
                            and $B'_{x} \cap C_i = \emptyset$, a contradiction.
                            Hence, $u$ and $v$ are not adjacent.
                    \end{itemize}

                \item
                    \label{5g} Fix $i$ such that $1 \leq i \leq k$.
                    Using the sets $C$ and $D$ defined in \ref{5f} we have \b{$e_A + e_B=e_C + e_D$}.

                    First we remark that the decomposition \eqref{eq:Bdecomposition} is disjoint since for any $x\in A\cap C_i$ and any $y\in A\setminus C_i$
                    we have by \ref{5c} that $B'_x\cap C_i=\emptyset$ and $B'_y\subseteq C_i$, therefore $B'_x\cap B'_y=\emptyset$.
                    \begin{equation*}
                        \begin{aligned}
                            e_A + e_B
                            & =
                            \left(
                                e_{A \cap B}
                                + \sum_{x\in A\cap C_i} e_{x}
                                + \sum_{y\in A\setminus C_i} e_{y}
                            \right)
                            +
                            \left(
                                e_{A \cap B}
                                + \sum_{u\in\bigcup_{x\in A'\cap C_i} B'_x} e_{u}
                                + \sum_{v\in\bigcup_{y\in A'\setminus C_i} B'_y}^{l} e_{v}
                            \right)
                            \\
                            & =
                            \left(
                                e_{A \cap B}
                                + \sum_{x\in A\cap C_i} e_{x}
                                + \sum_{v\in\bigcup_{y\in A'\setminus C_i} B'_y}^{l} e_{v}
                            \right)
                            +
                            \left(
                                e_{A \cap B}
                                + \sum_{y\in A\setminus C_i} e_{y}
                                + \sum_{u\in\bigcup_{x\in A'\cap C_i} B'_x} e_{u}
                            \right)\\
                            &=
                            e_C + e_D.
                        \end{aligned}
                    \end{equation*}
                    Claim \ref{5g} contradicts our hypothesis \ref{hypothesis} unless
                    $\{A, B\} = \{C, D\}$.
                    But $C_i=C\in \{A, B\}$ is also a contradiction to our hypothesis on $C_i$.
                    \qedhere
            \end{enumerate}
 \end{proof}

\section{Birkhoff polytope of a relation}
\label{sec:birkhoff}

The \demph{Birkhoff polytope} is defined as the convex hull of the $n\times n$
permutation matrices, where we view each permutation matrix as a vector in
$\RR^{n^2}$.
This polytope is a face of a stable set polytope of a graph, as
we now describe.

Let $G$ be a graph with vertex set
$\{(i,j): 1\le i,j\le n\}$ and with edges connecting
$(i,j)$ and $(k,l)$ if and only if
\begin{equation*}
    i=k
    \quad\text{or}\quad
    j=l.
\end{equation*}
In other words, $(i, j)$ and $(k, l)$ are connected if they index entries of an
$n \times n$ matrix that belong to the same row or to the same column.
Hence, the stable sets of $G$ correspond to selecting entries of an $n \times
n$ matrix with at most one entry from each row and each column. Equivalently,
they correspond to \demph{partial permutations} of $[n]$, or to
\demph{non-attacking rook placements} on an $n \times n$ board.

Since the indicator vectors for the maximal stable sets of $G$ are the
permutation matrices, the Birkhoff polytope is the face of $\SSP(G)$ supported
by the hyperplane consisting of the vectors whose coordinates sum to $n$.

This is similar to the relationship seen in
Sections~\ref{sec:matroid-independence-polytope}
and~\ref{sec:matroid-basis-polytope} between the basis polytope and the
independence polytope of a matroid, respectively. This suggests the following
definition that simultaneously generalizes these two constructions.

\begin{definition}
    Let $G$ be a finite simple graph and let
    $r = \max\{|A| : A \in \Stab(G)\}$.
    The \demph{Birkhoff polytope of $G$} is
    \begin{equation*}
        \defcolor{\BP(G)} = \conv\left\{ e_A : A \in \Stab(G) \text{~and~} |A| = r \right\}.
    \end{equation*}
    The \demph{rank} of $\BP(G)$ is defined to be the number $r$.
\end{definition}

Our characterization of the edges of $\SSP(G)$ also characterizes the edges of
$\BP(G)$.

\begin{theorem}
    \label{BP1skeleton}
    Let $\BP(G)$ be the Birkhoff polytope of a finite simple graph $G$
    and let $r$ denote its rank.
    \begin{enumerate}[noitemsep]
        \item\label{BP1skeleton-vertices}
            The vertex set of $\BP(G)$ is $\{e_A : A \subseteq \Bases(G)\}$,
            where $\Bases(G) = \{A \in \Stab(G) : |A| = r\}$.

        \item\label{BP1skeleton-edges}
            Two distinct vertices $e_A$ and $e_B$ form an edge in $\BP(G)$ if and only if
            for all $C, D \in \Bases(G)$, we have
            \begin{equation*}
                \text{$e_A + e_B = e_C + e_D$ implies $\{A, B\} = \{C, D\}$.}
            \end{equation*}
    \end{enumerate}
\end{theorem}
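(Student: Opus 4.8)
The plan is to imitate the structure of the proof of Theorem~\ref{RP1skeleton} but with $\Stab(G)$ replaced throughout by $\Bases(G) = \{A \in \Stab(G) : |A| = r\}$, paying attention to the two places where the argument used that subsets of stable sets are stable. The vertex statement~\eqref{BP1skeleton-vertices} is immediate: $\BP(G)$ is the intersection of $\SSP(G)$ with the hyperplane $\{u : \sum_v u_v = r\}$, which is a supporting hyperplane (all vertices of $\SSP(G)$ satisfy $\sum_v u_v \le r$), so the vertices of $\BP(G)$ are exactly those vertices $e_A$ of $\SSP(G)$ with $|A| = r$, i.e. $A \in \Bases(G)$. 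This also shows $\BP(G)$ is a face of $\SSP(G)$.

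For~\eqref{BP1skeleton-edges}, the forward direction is identical to the $\SSP$ case: if $e_A + e_B = e_C + e_D$ with $C, D \in \Bases(G)$ and $\{C,D\} \ne \{A,B\}$, then $e_A - e_B = (e_C - e_B) + (e_D - e_B)$, and Lemma~\ref{lem:edge} (applied inside $\BP(G)$, whose vertex set is $\{e_A : A \in \Bases(G)\}$) shows $\{e_A, e_B\}$ is not an edge. For the converse, suppose $\{e_A, e_B\}$ is not an edge of $\BP(G)$ and that hypothesis~\ref{hypothesis} holds with $\Stab(G)$ replaced by $\Bases(G)$. By Lemma~\ref{lem:edge} we get $e_{C_1}, \dots, e_{C_k}$ (with $C_i \in \Bases(G)$, distinct from $A, B$) and $\gamma_i > 0$ with $e_A - e_B = \sum_i \gamma_i(e_{C_i} - e_B)$, and Lemma~\ref{sandwich-lemma} gives $A \cap B \subseteq C_i \subseteq A \cup B$. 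Now run through the same Cases~1--5. Since $|A| = |B| = r$ we automatically have $|A'| = |B'|$ where $A' = A \setminus B$, $B' = B \setminus A$; in particular Cases 2, 3, 4 cannot occur at all (they each force $|A'| \ne |B'|$ or $A = B$), so only Cases~1 and 5 survive, and Case~1 is the trivial contradiction $e_A = e_B$.

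Thus everything reduces to Case~5, and the only real work is to check that each application of hypothesis~\ref{hypothesis} in subcases \ref{5a}, \ref{5b}, \ref{5g} produces sets that lie in $\Bases(G)$, not merely in $\Stab(G)$ — this is the main obstacle, since the $\SSP$ proof freely added and removed single elements. The key observation is that the sets actually produced in \ref{5g}, namely $C$ and $D$ in \ref{5f}, satisfy $e_A + e_B = e_C + e_D$, hence $|C| + |D| = |A| + |B| = 2r$; combined with $|C|, |D| \le r$ (as $C, D \in \Stab(G)$ and $r$ is the max), this forces $|C| = |D| = r$, so $C, D \in \Bases(G)$ and the contradiction with~\ref{hypothesis} is legitimate. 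For subcases \ref{5a} and \ref{5b} the naive moves $B \cup \{x\}$, $A \setminus \{x\}$ (resp. $A \cup \{b'\}$, $B \setminus \{b'\}$) change cardinalities and need not be bases; but the same cardinality trick rescues us — the pair summing correctly, e.g. $e_A + e_B = e_{A \setminus \{x\}} + e_{B \cup \{x\}}$ in \ref{5a}, has total cardinality $2r$, and each of $A \setminus \{x\}$, $B \cup \{x\}$ has size $\le r$ only for $B \setminus \{x\}$... here one must instead argue directly: if $B'_x = \emptyset$ then $B \cup \{x\}$ is stable of size $r+1 > r$, contradicting maximality of $r$, so $B'_x \ne \emptyset$ already for cardinality reasons, with no appeal to~\ref{hypothesis} needed. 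Similarly in \ref{5b}, $A \cup \{b'\}$ stable of size $r+1$ is impossible, so $B' = \bigcup_{x \in A'} B'_x$ follows directly from maximality. With \ref{5a} and \ref{5b} re-established this way and \ref{5g} justified by the cardinality argument above, the remaining subcases \ref{5c}--\ref{5e}, \ref{5f} are purely combinatorial manipulations of the $C_i$ and the sets $B'_x$ and go through verbatim, completing the proof.
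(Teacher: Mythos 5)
Your proof is correct, but it takes a genuinely different (and much longer) route than the paper, which disposes of Theorem~\ref{BP1skeleton} in one line: $\BP(G)$ is the face of $\SSP(G)$ cut out by the supporting hyperplane $\sum_v x_v = r$, so its vertices are exactly the vertices of $\SSP(G)$ on that hyperplane and its edges are exactly the edges of $\SSP(G)$ joining two such vertices, whence Theorem~\ref{RP1skeleton} applies. The only point the paper leaves implicit is precisely the cardinality observation you deploy in \ref{5g}: for $A,B\in\Bases(G)$ the edge criterion quantified over $\Stab(G)$ and the one quantified over $\Bases(G)$ coincide, because $e_A+e_B=e_C+e_D$ forces $|C|+|D|=|A|+|B|=2r$, hence $|C|=|D|=r$. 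You instead rerun the whole Case-5 analysis of the proof of Theorem~\ref{RP1skeleton} inside $\Bases(G)$, and your patches are sound: the observation that $|A'|=|B'|$ kills Cases 2--4; in \ref{5a} and \ref{5b} maximality of $r$ makes $B\cup\{x\}$ or $A\cup\{b'\}$ stable of size $r+1$ immediately absurd (the half-finished sentence about $e_{A\setminus\{x\}}+e_{B\cup\{x\}}$ is a false start you correctly abandon), and in \ref{5g} the sets $C,D$ of \ref{5f} are forced into $\Bases(G)$ by the same cardinality count, so hypothesis \ref{hypothesis} (restricted to $\Bases(G)$) is legitimately contradicted. What your route buys is a self-contained proof of the $\Bases(G)$-quantified statement that never leaves $\BP(G)$ and makes the quantifier-restriction step explicit; what the paper's route buys is brevity and the general principle that such a description of the $1$-skeleton automatically restricts to any face cut out by a supporting hyperplane. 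Your treatment of part~\eqref{BP1skeleton-vertices} coincides with the paper's.
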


This follows from the fact that $\BP(G)$ is a face of $\SSP(G)$: it is the
intersection of $\SSP(G)$ with the hyperplane consisting of the vectors whose
coordinates sum to $r$.

\begin{remark}
    \label{remark:maximal-sets-polytope-graph}
    It turns out Theorem~\ref{BP1skeleton} does not hold for the polytope
    constructed using all the stable sets of $G$ that are maximal with respect to
    set inclusion. For an example, consider the graph $G$ in Figure~\ref{fig:graph}.
    The following are all the stable sets of $G$ that are maximal with respect
    to inclusion:
    \begin{equation*}
        \begin{array}{r@{\hskip3pt}c@{\hskip3pt}l@{\hskip15pt}r@{\hskip3pt}c@{\hskip3pt}l@{\hskip15pt}r@{\hskip3pt}c@{\hskip3pt}l}
            A &=& \{1,2,3 \} &
            B &=& \{4,5,6\} &
            C &=& \{7,8,9\}
            \\
            D &=& \{1,5,6\} &
            E &=& \{2,4,6\} &
            F &=& \{3,4,5\}
            \\
            G &=& \{1,8,9\} &
            H &=& \{2,7,9\} &
            I &=& \{3,7,8\}
            \\
            J &=& \{2,3,4,7\} &
            K &=& \{1,3,5,8\} &
            L &=& \{1,2,6,9\}
        \end{array}
    \end{equation*}
    Then in the polytope that is the convex hull of the indicator vectors of
    these sets, we have that $e_A$ and $e_B$ are not adjacent: indeed, since
    \begin{equation*}
        e_A - e_B = (e_D - e_B) + (e_E - e_B) + (e_F - e_B),
    \end{equation*}
    it follows from Lemma~\ref{lem:edge} that $e_A$ and $e_B$ are not adjacent.
    However, there are no other maximal stable sets $A'$ and
    $B'$ distinct from $A$ and $B$ such that $e_A+e_B=e_A'+e_B'$.
    \begin{sagecode}
        sage: V = {1,2,3,4,5,6,7,8,9}
        sage: E = {(1,7),(1,4),(2,8),
        ....:      (2,5),(3,9),(3,6),
        ....:      (7,5),(7,6),(8,4),
        ....:      (8,6),(9,4),(9,5)}
        sage: G = Graph((V, E), format="vertices_and_edges")
        sage: Stab = [A for A in Subsets(V) if G.is_independent_set(A)]
        sage: P = Poset((Stab, attrcall("issubset")))
        sage: M = P.maximal_elements()
        sage: sorted(map(sorted, M))
        [[1, 2, 3],
         [1, 2, 6, 9],
         [1, 3, 5, 8],
         [1, 5, 6],
         [1, 8, 9],
         [2, 3, 4, 7],
         [2, 4, 6],
         [2, 7, 9],
         [3, 4, 5],
         [3, 7, 8],
         [4, 5, 6],
         [7, 8, 9]]

        sage: E = (QQ^len(V)).basis()
        sage: e = lambda A : sum([E[a - 1] for a in A], E[0].parent().zero())
        sage: MP = Polyhedron(vertices=[e(C) for C in M])
        sage: EdgeGraph = MP.graph()
        sage: V = {Set([i + 1 for (i, vi) in enumerate(v) if vi != 0]): v
        ....:                                       for v in MP.vertices()}

        sage: A = Set({1, 2, 3})
        sage: B = Set({4, 5, 6})
        sage: EdgeGraph.has_edge((V[A], V[B]))
        False

        sage: {Set([C, D]) for C in M for D in M if e(A) + e(B) == e(C) + e(D)}
        {{{1, 2, 3}, {4, 5, 6}}}
    \end{sagecode}
    \begin{figure}[htpb]
        \centering
        \begin{tikzpicture}[scale=1]
            \node [draw, circle, minimum size=6mm] (v0) at (0cm,  1cm ) {$1$};
            \node [draw, circle, minimum size=6mm] (v2) at (5cm,  1cm ) {$3$};
            \node [draw, circle, minimum size=6mm] (v6) at (1.5cm,-1cm) {$7$};
            \node [draw, circle, minimum size=6mm] (v5) at (3.5cm,-1cm) {$6$};
            \node [draw, circle, minimum size=6mm] (v3) at (1.5cm,3cm ) {$4$};
            \node [draw, circle, minimum size=6mm] (v8) at (3.5cm,3cm ) {$9$};
            \node [draw, circle, minimum size=6mm] (v4) at (3.5cm,1cm ) {$5$};
            \node [draw, circle, minimum size=6mm] (v7) at (1.5cm,1cm ) {$8$};
            \node [draw, circle, minimum size=6mm] (v1) at (2.5cm,2cm ) {$2$};
            \draw [black, line width=3pt] (v0) to (v3);
            \draw [black, line width=3pt] (v0) to (v6);
            \draw [black, line width=3pt] (v1) to (v4);
            \draw [black, line width=3pt] (v1) to (v7);
            \draw [black, line width=3pt] (v2) to (v5);
            \draw [black, line width=3pt] (v2) to (v8);
            \draw [black, line width=3pt] (v3) to (v7);
            \draw [black, line width=3pt] (v3) to (v8);
            \draw [black, line width=3pt] (v4) to (v6);
            \draw [black, line width=3pt] (v4) to (v8);
            \draw [black, line width=3pt] (v5) to (v6);
            \draw [black, line width=3pt] (v5) to (v7);
        \end{tikzpicture}
        \caption{A graph such that the $1$-skeleton of the convex hull of the indicator functions
            of the stable sets that are maximal with respect to inclusion does
            not satisfy Theorem~\ref{BP1skeleton}. For details, see
            Remark~\ref{remark:maximal-sets-polytope-graph}.}
        \label{fig:graph}
    \end{figure}
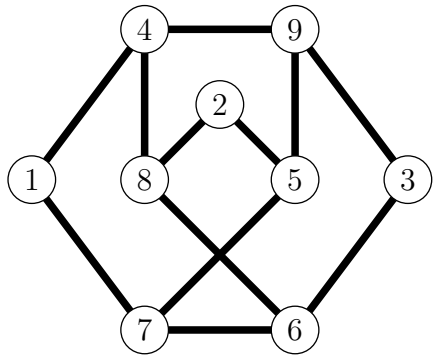
\end{remark}

\section{On $0/1$-polytopes satisfying Theorem~\ref{RP1skeleton}}
\label{hierarchy}

The goal of this section is to study the class of $0/1$-polytopes whose
$1$-skeleton is described by the criterion in Theorem~\ref{RP1skeleton}. These results
are summarized in Figure~\ref{inclusions-polytope-classes}.
More explicitly, a polytope $P$ belongs to this class if and only if $P$ satisfies the following condition:
\begin{equation*}
    \label{1-skeleton-condition}
    \begin{array}{l}
        \text{two distinct vertices $v$ and $u$ form an edge of $P$ if and only if} \\
        \text{there exists a unique way to write $v + u$ as the sum of two vertices of $P$.}
    \end{array}
    \tag{E}
\end{equation*}

We begin with a combinatorial reformulation of the condition~\eqref{1-skeleton-condition}.
\begin{lemma} \label{rem:using-E}
Let $\mathcal C$ be any set of subsets of $X$. If $\Poly_{\mathcal C}$ satisfies condition~\eqref{1-skeleton-condition},
then we can determine the $1$-skeleton combinatorialy as follows. For every pair $\{A,B\}\subseteq {\mathcal C}$ we compute
    $$\chi_{\mathcal C}(\{A,B\})=(A\cap B, A\cup B).$$
Then $\{A,B\}$ is an edge of $P_{\mathcal C}$ if and only if $\chi^{-1}_{\mathcal C}(A\cap B, A\cup B) = \big\{\{A,B\}\big\}$.
\end{lemma}


\subsection{Stable set polytopes and property (E)}
By Theorem~\ref{RP1skeleton}, all stable set polytopes satisfy
\eqref{1-skeleton-condition}, but there are $0/1$-polytopes satisfying
\eqref{1-skeleton-condition} that are not stable set polytopes.
For example, consider the matroid independence polytope
\begin{equation} \label{ex:TrucatedCube}
         \Poly_{\left\{\emptyset,\{1\},\{2\},\{3\},\{1,2\},\{1,3\},\{2,3\}\right\}}\,,
      \end{equation}
which is the cube in $\RR^3$ with the vertex $e_1 + e_2 + e_3$ removed.
In addition, not all $0/1$-polytopes satisfy condition \eqref{1-skeleton-condition}.
 An instance of this is the polytope of
Remark~\ref{remark:maximal-sets-polytope-graph}.
These examples establish the following strict inclusions (see also
Figure~\ref{inclusions-polytope-classes}):
\begin{equation*}
    \text{stable set polytopes}
    \quad \subsetneq \quad
    \text{$0/1$-polytopes satisfying \eqref{1-skeleton-condition}}
    \quad \subsetneq \quad
    \text{$0/1$-polytopes}.
\end{equation*}

\subsection{Partition matroid polytopes (intersection of stable set polytopes and matroid independence polytopes)}
\label{ssec:PartMatroid}
Our next result states that a $0/1$-polytope is both a stable set polytope of
a graph and the independent set polytope of a matroid if and only if the graph is a union
of complete graphs or equivalently, if and only if the matroid is a direct sum of rank 1 uniform
matroids; such matroids are called \demph{partition matroids}.

\begin{proposition}
    \label{stable-set-polytopes-cap-matroid-polytopes}
    Let $G$ be a finite simple graph.
    Then $\SSP(G)$ is the independent set polytope of a matroid if and only if $G$ is
    a union of complete graphs.
\end{proposition}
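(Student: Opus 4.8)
The plan is to prove both directions of the equivalence by translating the combinatorics of stable sets and independent sets into the language of forbidden substructures. For the easier direction, suppose $G$ is a union of complete graphs, say $G = K_{n_1} \sqcup \cdots \sqcup K_{n_r}$ on vertex set $V = V_1 \sqcup \cdots \sqcup V_r$ with $|V_i| = n_i$. Then $A \subseteq V$ is stable for $G$ iff $|A \cap V_i| \leq 1$ for every $i$. I would observe that this is precisely the collection of independent sets of the partition matroid obtained as the direct sum $U_{1,n_1} \oplus \cdots \oplus U_{1,n_r}$ of uniform matroids of rank one: axioms \ref{matroid-axiom-0} and \ref{matroid-axiom-1} are immediate, and for \ref{matroid-axiom-2}, if $A, B$ are both stable and $|B| > |A|$, then by pigeonhole there is a block $V_i$ with $|B \cap V_i| = 1$ and $|A \cap V_i| = 0$; the element $b \in B \cap V_i$ then satisfies $A \cup \{b\} \in \Stab(G)$. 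Hence $\Stab(G)$ is the independent set family of a matroid, and $\SSP(G)$ is that matroid's independence polytope.

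For the converse, suppose $G$ is \emph{not} a union of complete graphs; I want to exhibit a violation of the matroid exchange axiom \ref{matroid-axiom-2} among the stable sets of $G$, which suffices since the vertex set of a matroid independence polytope determines the matroid (by Theorem~\ref{RP1skeleton}\eqref{RP1skeleton-vertices}, the vertex set of $\SSP(G)$ is exactly $\{e_A : A \in \Stab(G)\}$, so if $\SSP(G)$ were a matroid independence polytope the underlying independent sets would have to be $\Stab(G)$ itself). The key structural fact is: a graph $G$ is a disjoint union of cliques iff $G$ contains no induced path on three vertices $P_3$. So if $G$ is not a union of complete graphs, there exist vertices $x, y, z$ with $x \sim y$, $y \sim z$, but $x \not\sim z$. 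Then $A = \{y\}$ and $B = \{x, z\}$ are both stable, $|B| > |A|$, but neither $A \cup \{x\} = \{x, y\}$ nor $A \cup \{z\} = \{y, z\}$ is stable, so there is no $b \in B \setminus A$ with $A \cup \{b\} \in \Stab(G)$. This directly contradicts \ref{matroid-axiom-2}, so $\Stab(G)$ is not the independent set family of any matroid, and therefore $\SSP(G)$ is not a matroid independence polytope.

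The main thing to justify carefully is the claim that $\SSP(G)$ being a matroid independence polytope forces the underlying matroid's independent sets to coincide with $\Stab(G)$. This follows because the vertices of a $0/1$-polytope are determined (they are exactly the $0/1$-points in the polytope, equivalently the points not expressible as nontrivial convex combinations of others, as used in the proof of Theorem~\ref{RP1skeleton}\eqref{RP1skeleton-vertices}), and the vertices of the independence polytope of a matroid $M$ are precisely the indicator vectors of independent sets of $M$; matching these two vertex sets forces $\mathcal{I}(M) = \Stab(G)$. I expect the ``no induced $P_3$'' characterization of disjoint unions of cliques to be the one genuinely nontrivial ingredient, though it is classical; everything else is a short verification. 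Finally, I would remark that this also pins down the intersection advertised in the introduction: $\SSP(G)$ is a partition matroid polytope exactly in this case, recovering the inclusion picture of Figure~\ref{inclusions-polytope-classes}.
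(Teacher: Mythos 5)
Your proposal is correct and follows essentially the same route as the paper: for the forward direction you identify $\Stab(G)$ with the independent sets of a partition matroid $U_{1,n_1}\oplus\cdots\oplus U_{1,n_r}$ (the paper does this via products $\SSP(K_a)\times\SSP(K_b)$ and induction, you verify the axioms directly), and for the converse you use exactly the paper's induced-$P_3$ argument, taking $A=\{y\}$, $B=\{x,z\}$ to violate axiom \ref{matroid-axiom-2}. Your explicit justification that the matroid's independent sets must coincide with $\Stab(G)$ (via the $0/1$-vertex description) is a point the paper leaves implicit, but it does not change the substance of the argument.
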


\begin{proof}
    ($\Leftarrow$) First assume that $G = K_n$.
    Then $\SSP(G) = \conv\{0, e_1, \ldots, e_{n}\}$.
    Thus, $\SSP(G)$ is the independent set polytope of the uniform matroid
    $U_{1,n}$ whose independent sets are the subsets of $[n]$ that contain at
    most $1$ element.
    Next, if $G$ is the disjoint union of two complete graphs $K_a$ and $K_b$,
    then $\SSP(G) = \SSP(K_a) \times \SSP(K_b)$, and hence $\SSP(G)$ is the
    independent set polytope of the matroid $U_{1,a}\oplus U_{1,b}$. The
    general case follows by induction.

    ($\Rightarrow$)
    Suppose that $G$ is a graph with vertex set $[n]$ and that the stable sets
    of $G$ satisfy conditions \ref{matroid-axiom-1} and \ref{matroid-axiom-2}
    of the definition of a matroid (see
    Section~\ref{sec:matroid-independence-polytope}).
    Write $G=G_1\cup\cdots\cup G_r$ as the union of its connected components.
    If the number of vertices of $G_i$ is less than $3$, then it is a complete
    graph ($K_1$ or $K_2$), so consider a connected component $G_i$ with at
    least $3$ vertices. By relabelling, we can assume $i = 1$.

    If $G_1$ is not a complete graph, then there exists three vertices $\{i_1,
    i_2, i_3\}$ of $G$ such that $\{i_1,i_2\}$ and $\{i_2,i_3\}$ are edges
    of $G$ while $\{i_1, i_3\}$ is not.
    It follows that both $A=\{i_2\}$ and $B=\{i_1,i_3\}$ are stable sets of
    $G$.
    By \ref{matroid-axiom-2}, there is an element $b \in B\setminus A$ such
    that $A\cup \{b\}$ is also a stable set, but this is not the case.
    This contradiction implies $G_1$ is a complete graph.
\end{proof}

Most  graphs are not the union of complete graphs, which establishes the containment
\begin{equation*}
    \text{partition matroids polytopes}
    \quad \subsetneq \quad
     \text{stable set polytopes}.
\end{equation*}

\subsection{Matroid polytopes and property (E)}
Section~\ref{ssec:PartMatroid} shows that for most matroids, its independence polytope is not a stable set polytope.
Yet property \eqref{1-skeleton-condition} holds for the $1$-skeleton of matroid basis polytopes and the matroid
independence polytopes as we now prove.

\begin{theorem}
    \label{matroid-polytopes-satisfy-(E)}
    Let $P_M$ be the independence polytope of a matroid $M$.
    Two distinct vertices $e_A$ and $e_B$ of $P_M$ form an edge of $P_M$ if and
    only if there exists a unique way to write $e_A + e_B$ as the sum of two
    vertices of $P_M$.
\end{theorem}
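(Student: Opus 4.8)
The plan is to prove both directions by exploiting the well-known edge characterization of matroid independence polytopes, together with the combinatorial structure of independent sets. Recall (Edmonds) that two independent sets $A, B$ of $M$ are adjacent in $P_M$ if and only if the symmetric difference $A \symdiff B$ is a "minimal exchange'': either $A \symdiff B$ is a single element, or $|A| = |B|$ and $A \symdiff B = \{a, b\}$ with $a \in A \setminus B$, $b \in B \setminus A$ (and conversely, non-adjacency corresponds to a genuine multi-step exchange). I would first state this characterization as a known fact (citing \cite{Edmonds1970}), or, to keep the paper self-contained, derive non-adjacency from Lemma~\ref{lem:edge} exactly as was done for $\SSP(G)$.

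For the easy direction ($\Leftarrow$, contrapositive): suppose $e_A$ and $e_B$ are \emph{not} an edge of $P_M$. I want to produce $\{C, D\} \neq \{A, B\}$ with $e_A + e_B = e_C + e_D$. As in the proof of Theorem~\ref{RP1skeleton}, write $A \cap B \subseteq C \subseteq A \cup B$ is forced. The key observation is: if $A \setminus B$ and $B \setminus A$ are both nonempty, pick $a \in A \setminus B$; since $\{e_A, e_B\}$ is not an edge, the exchange graph argument (or directly Lemma~\ref{lem:edge} combined with Lemma~\ref{sandwich-lemma}) shows there must be a "swap'' available, namely some $b \in B \setminus A$ with $A' = (A \setminus \{a\}) \cup \{b\} \in \Ind$ and $B' = (B \setminus \{b\}) \cup \{a\} \in \Ind$; then $e_A + e_B = e_{A'} + e_{B'}$ and $\{A', B'\} \neq \{A, B\}$. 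If instead one of $A \setminus B$, $B \setminus A$ is empty, say $A \subsetneq B$, then by the downward-closure axiom \ref{matroid-axiom-1} one can move any element $x \in B \setminus A$ to get $A \cup \{x\}$ and $B \setminus \{x\}$, both independent, again giving a nontrivial decomposition; this forces $|B \setminus A| = 1$, in which case the only sets sandwiched between $A$ and $B$ are $A$ and $B$ themselves, contradicting non-adjacency (via Lemma~\ref{sandwich-lemma}). So the main content is extracting the swap $(a,b)$ from non-adjacency, which is precisely the matroid exchange property applied to the face of $P_M$ spanned by $A$ and $B$.

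For the other direction ($\Rightarrow$): suppose there exist $C, D$ independent with $e_A + e_B = e_C + e_D$ but $\{C, D\} \neq \{A, B\}$. Then $e_A - e_B = (e_C - e_B) + (e_D - e_B)$, and since $C, D$ are distinct from $A$ and $B$ (any coincidence would force $\{C,D\}=\{A,B\}$), Lemma~\ref{lem:edge} immediately gives that $e_A$ and $e_B$ are not an edge of $P_M$. This direction is essentially verbatim from the proof of Theorem~\ref{RP1skeleton}\eqref{RP1skeleton-edges} and needs only that $P_M$ is a $0/1$-polytope whose vertices are indicator vectors of the independent sets.

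I expect the main obstacle to be the $\Leftarrow$ direction, specifically cleanly deducing the existence of a single-element swap $(a,b)$ from the mere failure of adjacency, in a way that does not just quote Edmonds' theorem but fits the Lemma~\ref{lem:edge}/Lemma~\ref{sandwich-lemma} framework already set up. One clean route: from Lemma~\ref{lem:edge} obtain vertices $e_{C_1}, \dots, e_{C_k}$ with $e_A - e_B = \sum \gamma_i (e_{C_i} - e_B)$, $\gamma_i > 0$; by Lemma~\ref{sandwich-lemma} each $C_i$ satisfies $A \cap B \subseteq C_i \subseteq A \cup B$ and $C_i \neq A, B$. Pick any such $C_i =: C$ and set $D := (A \cup B) \setminus (C \setminus (A \cap B))$, i.e. the "complementary'' sandwiched set; one checks $e_A + e_B = e_C + e_D$ as multisets of coordinates. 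The work is then showing $D \in \Ind$: writing $C = (A \cap B) \cup A_1 \cup B_1$ with $A_1 \subseteq A \setminus B$, $B_1 \subseteq B \setminus A$, we have $D = (A \cap B) \cup (A \setminus B \setminus A_1) \cup (B \setminus A \setminus B_1)$; independence of $D$ can be obtained by repeated use of \ref{matroid-axiom-1} and \ref{matroid-axiom-2} starting from $A$ and $B$ (first build up $(A\cap B)\cup A_1$ inside $A$ or $C$, then exchange into $B$), or, most economically, by invoking basis/independent-set exchange to realize both $C$ and $D$ as independent sets lying on the same minimal face as $A, B$. Handling the bookkeeping of these nested exchanges is the only genuinely fiddly part; everything else is parallel to the $\SSP(G)$ argument.
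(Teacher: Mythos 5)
Your easy direction (non-uniqueness $\Rightarrow$ non-edge) is exactly the paper's argument and is fine. The hard direction, however, has a genuine gap, and it sits precisely in the only case where non-adjacency actually has to be used. Your ``key observation'' extracts a swap $a\in A\setminus B$, $b\in B\setminus A$ with $A'=(A\setminus\{a\})\cup\{b\}$ and $B'=(B\setminus\{b\})\cup\{a\}$ independent and asserts $\{A',B'\}\neq\{A,B\}$; but when $A\setminus B=\{a\}$ and $B\setminus A=\{b\}$ the swap returns $A'=B$, $B'=A$, so no second decomposition is produced at all. Such pairs can genuinely be non-adjacent (take $A=\{a\}$, $B=\{b\}$ with $\{a,b\}\in\Ind$: $e_A,e_B$ are opposite corners of a square face), and there the second decomposition must be $\{A\cap B,\,A\cup B\}$, which requires proving $A\cup B\in\Ind$ from non-adjacency. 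That step is the real content: from Lemma~\ref{lem:edge} and Lemma~\ref{sandwich-lemma} each $C_i$ is $A\cap B$ or $A\cup B$; if all $C_i$ equalled $A\cap B$ one would get $e_a-e_b=-\gamma e_b$, which is absurd, so some $C_i=A\cup B$ and hence $A\cup B$ is independent. Nothing in your proposal supplies this, and the misstated ``well-known'' adjacency criterion in your preamble (adjacency for a two-element exchange additionally requires $A\cup B\notin\Ind$) is a symptom of the same oversight. A secondary issue: for $|A|\neq|B|$ with neither difference empty you also invoke a symmetric swap, which the standard exchange axioms do not give for independent sets of different sizes; the paper instead splits on $|A|\neq|B|$ (using \ref{matroid-axiom-2} to augment $B$ by some $a\in A\setminus B$) versus $|A|=|B|$ (Strong Exchange Theorem), in both cases first forcing $A$ and $B$ to differ by a single augmentation or single swap and then deriving a contradiction from the $C_i$.

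Your fallback route does not close the gap either: picking an arbitrary $C_i$ and taking the complementary sandwiched set $D=(A\cup B)\setminus(C_i\setminus(A\cap B))$ leaves the claim $D\in\Ind$ unproved, and this is not ``bookkeeping'' --- for a general independent $C$ with $A\cap B\subseteq C\subseteq A\cup B$ the complement can be dependent (it can be larger than the rank of $A\cup B$), so any proof must use the special structure of the $C_i$ arising in Lemma~\ref{lem:edge}, i.e.\ essentially redo the analysis above. So the overall architecture (contrapositive, Lemmas~\ref{lem:edge} and~\ref{sandwich-lemma}, exchange properties) matches the paper, but as written the argument fails exactly at the $|A\symdiff B|=2$ case and at the independence of the complementary set.
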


\begin{proof}
    ($\Rightarrow$)
    We prove the contrapositive.
    If $e_A + e_B = e_C + e_D$ with $\{A, B\} \neq \{C, D\}$,
    then
    \begin{equation*}
        e_A - e_B = (e_A + e_B) - 2 e_B
        = (e_C - e_B) + (e_D - e_B),
    \end{equation*}
    which, by Lemma~\ref{lem:edge}, implies that $e_A$ and $e_B$ are not the
    vertices of an edge.

    ($\Leftarrow$)
    We provide a proof by contradiction.
    Let $e_A$ and $e_B$ be two vertices of $P_M$ such that $e_A + e_B$
    can be written as a sum of two vertices of $P_M$ in a unique way;
    and suppose that $e_A$ and $e_B$ are \emph{not} the vertices of an edge of
    $P_M$.
    Combining Lemma~\ref{lem:edge} and Lemma~\ref{sandwich-lemma},
    there exist vertices $e_{C_1}, \ldots, e_{C_k}$
    of $P_M$, each distinct from $e_A$ and $e_B$,
    and $\gamma_1, \ldots, \gamma_k > 0$ such that
    \begin{equation}\label{adjacent-condition}
        e_A-e_B=\sum_{i=1}^k \gamma_i(e_{C_i}-e_B)
        \qquad\text{and}\qquad
        A \cap B \subseteq C_i \subseteq A \cup B
        \text{~for all $1 \leq i \leq k$}.
    \end{equation}

    \begin{enumerate}[wide, itemsep=1ex, topsep=1ex, label={\textbf{Case \arabic*:}}]
        \item
            $|A| \neq |B|$.
            Without loss of generality, suppose $|A| > |B|$.
            By the matroid axiom \ref{matroid-axiom-2}, there exists $a\in
            A\setminus B$ such that $B\cup \{a\}$ is independent.
            Also, $A\setminus \{a\}$ is independent
            by \ref{matroid-axiom-1}.
            Hence, both $e_{B \cup \{a\}}$ and $e_{A \setminus \{a\}}$
            are vertices of $P_M$ that sum to
            \begin{equation*}
                e_{B \cup \{a\}} + e_{A \setminus \{a\}} =
                (e_{B} + e_{a}) + (e_{A} - e_{a}) =
                e_A + e_B.
            \end{equation*}
            Since there is a unique way to write $e_A + e_B$
            as the sum of two vertices of $P_M$, it follows that
            $A = B \cup \{a\}$ and $B = A \setminus \{a\}$.
            Therefore, all the $C_i$ appearing in
            Equation~\eqref{adjacent-condition} satisfy
            $B \subseteq C_i \subseteq B \cup \{a\}$.
            Thus, $C_i = B$ or $C_i = A$, both of
            which contradict $C_i \neq A, B$.

        \item
            $|A| = |B| = r$.
            By the \emph{Strong Exchange Theorem} (\cite[section 1.5.1]{bgw})
            for any
            $a \in A\setminus B$, there exists
            $b \in B\setminus A$ such that
            $(A \setminus \{a\}) \cup \{b\}$
            and
            $(B \setminus \{b\}) \cup \{a\}$
            are independent sets.
            Hence,
            $e_{(A\setminus \{a\})\cup \{b\}}$
            and
            $e_{(B\setminus \{b\})\cup \{a\}}$
            are vertices of $P_M$
            that sum to $e_A + e_B$.
            Since there is a unique way to write $e_A + e_B$
            as the sum of two vertices, it follows that
            \begin{equation*}
                A = (B\setminus \{b\})\cup \{a\}
                \qquad\text{and}\qquad
                B = (A\setminus \{a\})\cup \{b\}.
            \end{equation*}

            Consider the sets $C_i$ appearing in Equation~\eqref{adjacent-condition}.
            Since $A \cap B \subseteq C_i \subseteq A \cup B = (A \cap B) \cup \{a,b\}$,
            there are two possibilities: either $C_i = A \cap B$ or $C_i
            = (A \cap B) \cup \{a, b\}$ (recall that $C_i \neq A, B$).

            Suppose there exists an $i$ such that $C_i = (A \cap B) \cup \{a, b\}$.
            Since $C_i = A \cup \{b\}$, we have that $e_{C_i}$ and $e_{B \setminus
            \{b\}}$ are vertices of $P_M$ that sum to $e_A + e_B$.
            This implies
            $A = A \cup \{b\}$ (which contradicts $b \in A \setminus B$)
            or
            $A = B \setminus \{b\}$ (which contradicts $|A| = |B|$).
            Thus, no such $i$ exists.

            Therefore, each $C_i$ appearing in Equation~\eqref{adjacent-condition} is
            equal to $A \cap B$, and so
            \begin{equation*}
                e_A - e_B = \gamma (e_{A \cap B} - e_{B})
            \end{equation*}
            for some $\gamma > 0$.
            Substituting $A = (B \setminus \{b\}) \cup \{a\}$
            on the left,
            and $B = (A \cap B) \cup \{b\}$ on the right,
            we obtain $e_{a} - e_{b} = \gamma (- e_{b})$,
            which is absurd since
            $e_a$ and $e_b$ are linearly independent.
            \qedhere
    \end{enumerate}
\end{proof}

One can also find several polytopes that satisfy~\eqref{1-skeleton-condition} but do not come from a matroid nor the stable sets of a graph. An example of this is the polytope
\begin{equation} \label{ex:nonsimplicial}
   \widehat{C} = \Poly_{\left\{\emptyset,\{2\},\{3\},\{4\},\{2,3\},\{3,4\},\{1,2,3\}\right\}}\,.
\end{equation}
These results establish the following strict inclusions from
Figure~\ref{inclusions-polytope-classes}:
\begin{equation*}
    \text{partition matroids polytopes}
    \quad \subsetneq \quad
    \text{matroids polytopes}
    \quad \subsetneq \quad
    \text{$0/1$-polytopes satisfying \eqref{1-skeleton-condition}}
\end{equation*}

We end this section by remarking that one can derive from
Theorem~\ref{matroid-polytopes-satisfy-(E)} the description of the $1$-skeleton of
the matroid basis polytope first given in \cite[Thm. 4.1]{ggms1987} and that of
the matroid independence polytope first given by
\cite[Thm. 5.1]{Topkis1984}.

\subsection{Simplicial complex polytopes and property (E)}
\label{ssec:ConjectureSCP}

To complete the justification of the inclusions depicted in
Figure~\ref{inclusions-polytope-classes}, we explore the relationship between
simplicial complex polytopes, matroid polytopes, and $0/1$-polytopes satisfying (E).

Since the collection of independent sets of a matroid
and the collection of stable sets of a graph
are both simplicial complexes,
the class of matroid polytopes and the class of stable set polytopes
are included in the class of simplicial complex polytopes.
The intersection of these two classes is the class
of partition matroid polytopes defined from Section~\ref{ssec:PartMatroid}.

The inclusion of stable set polytopes and matroid polytopes in the class
of simplicial complex polytopes is strict because
$\Poly_{\left\{\emptyset,\{1\}, \{2\},\{3\},\{4\} ,\{1,2\},\{1,3\},\{2,3\}\right\}}$
is a simplicial complex polytope
that is neither a matroid polytope nor a stable set polytope.

Finally, we show that the class of simplicial complex polytopes overlaps with
the class of $0/1$-polytopes satisfying (E), but neither is included in the
other.
The polytope in~\eqref{ex:nonsimplicial} is an example of a $0/1$-polytope
satisfying (E) that is not a simplicial complex polytope (nor the facet of
a simplicial complex polytope).
And the polytope
\begin{equation*}
    \Poly_{\left\{A: A\subseteq \{1,2,6\}, \{3,4,5\}, \{3,4,6\}, \{2,3,5\} \text{ or } \{1,4,5\} \right\}}
\end{equation*}
is a simplicial complex polytope that does not satisfy (E). Indeed, let $A=
\{1,2,6\}$ and $B= \{3,4,5\}$, using Lemma~\ref{rem:using-E} we compute
$\chi^{-1}(\emptyset,\{1,2,3,4,5,6\}) = \big\{\{A,B\}\big\}$. If the
condition~\eqref{1-skeleton-condition} is satisfied it should be an edge of the
polytope, but it is not. This is the smallest possible counter example: all
simplicial complexes for $n<6$ satisfy the
condition~\eqref{1-skeleton-condition}.

\section{On the diameter}
\label{section-on-the-diameter}

The \emph{Hirsch conjecture} asserts that the diameter of every $d$-dimensional
convex polytope $P$ with $n$ facets is at most $n - d$, where the
\demph{diameter} of $P$ is the smallest number $\delta(P)$ such that every pair
of vertices of $P$ are connected in its $1$-skeleton by a shortest path of
length at most $\delta(P)$.
The conjecture remained open for more than fifty years before a counter-example
was found~\cite{Santos2012}. Although it is false in general, it is true for
$0/1$-polytopes \cite{Naddef1989}.
Here we provide a improved bounds for the diameter for the polytopes $\BP(G)$
and $\SSP(G)$.

\subsection{A bound on the diameter of $\BP(G)$}
Our first step is to prove a technical result that is inspired by
the basis exchange property for matroids.

\begin{lemma}\label{quasimatroid}
    If $\mathcal{I}$ is a family of equisized finite sets
    and $A, B \in \mathcal{I}$, then for every $i \in A \setminus B$,
    there exist $E \subseteq A\setminus B$, $F \subseteq B\setminus A$
    satisfying:
    \begin{enumerate*}
        \item $|E|=|F|$;
        \item $i\in E$;
        \item $(A \setminus E) \cup F \in \mathcal{I}$; and
        \item if $e_A + e_{(A\setminus E)\cup F}=e_M + e_N$ with
            $M,N\in\mathcal{I}$, then $\{A, (A\setminus E) \cup F\}=\{M,N\}$.
    \end{enumerate*}
\end{lemma}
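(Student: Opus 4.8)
The plan is to prove this by choosing $E$ and $F$ to be as small as possible subject to satisfying conditions (1)--(3), and then arguing that minimality forces condition (4). More precisely, first I would observe that the family of pairs $(E, F)$ with $E \subseteq A \setminus B$, $F \subseteq B \setminus A$, $|E| = |F|$, $i \in E$, and $(A \setminus E) \cup F \in \mathcal{I}$ is non-empty: since $|A| = |B| = k$, we have $|A \setminus B| = |B \setminus A|$, so $(E, F) = (A \setminus B,\, B \setminus A)$ works, giving $(A \setminus E) \cup F = B \in \mathcal{I}$. Among all such pairs, choose one with $|E|$ minimal. The goal is then to show condition (4) holds for this choice. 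Write $C = (A \setminus E) \cup F$ for brevity; note $A \cap C = A \setminus E$ (since $F \cap A = \emptyset$) and $A \cup C = A \cup F$, and that $i \in A \setminus C$.

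Next I would suppose $e_A + e_C = e_M + e_N$ with $M, N \in \mathcal{I}$ and $\{A, C\} \neq \{M, N\}$, aiming for a contradiction with minimality. By the same indicator-vector bookkeeping used in Lemma~\ref{sandwich-lemma} (applied to $e_A - e_C = (e_M - e_C) + (e_N - e_C)$), we get $A \cap C \subseteq M, N \subseteq A \cup C$; combined with $|M| = |N| = k = |A| = |C|$ and $|A \cap C| + |A \cup C| = 2k$, this pins down $M$ and $N$ to be "intermediate" sets between $A \cap C$ and $A \cup C$ of size $k$. Writing $A \setminus C = E$ (an element of which is $i$) and $C \setminus A = F$, each of $M, N$ has the form $(A \cap C) \cup E' \cup F'$ with $E' \subseteq E$, $F' \subseteq F$, $|E'| + |F'| = |E| = |F|$; and the condition $e_M + e_N = e_A + e_C$ forces the two pairs $(E', F')$ and $(E'', F'')$ (for $M$ and $N$) to satisfy $E' \cup E'' = E$, $E' \cap E'' = \emptyset$, $F' \cup F'' = F$, $F' \cap F'' = \emptyset$ — i.e. $M$ and $N$ come from complementary "swaps". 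Since $\{A, C\} \neq \{M, N\}$, neither of $M, N$ equals $A$ or $C$, so both $E'$ and $E''$ are proper non-empty subsets of $E$ (and likewise for $F$).

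The key step is then the following: $i$ lies in exactly one of $E', E''$, say $i \in E'$. Consider the pair $(E', F')$ where now $F'$ is the corresponding subset of $F$ for the set $M$. I would check it satisfies conditions (1)--(3): $E' \subseteq E \subseteq A \setminus B$; $F' \subseteq F \subseteq B \setminus A$; $|E'| = ?$ — here I need $|E'| = |F'|$, which holds because $|E'| + |F'| = |E| = |F|$ and... wait, this only gives $|E'| + |F'| = |F|$, not $|E'| = |F'|$. So I would instead note that $M = (A \cap C) \cup E' \cup F' = (A \setminus E) \cup E' \cup F' = (A \setminus (E \setminus E')) \cup F'$; setting $\widetilde{E} = E \setminus E''= E'$... let me recompute: $A \setminus (E \setminus E') = (A \setminus E) \cup (E \cap E') = (A \cap C) \cup E'$, so $M = (A \setminus (E\setminus E')) \cup F'$ and for this to be of the claimed form $(A \setminus \widetilde E) \cup \widetilde F$ with $|\widetilde E| = |\widetilde F|$ we'd take $\widetilde E = E \setminus E'= E''$ and $\widetilde F = F'$; since $|M| = k = |A|$ we get $|\widetilde E| = |\widetilde F|$, i.e. $|E''| = |F'|$. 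If $i \notin E''$ this is still a valid smaller pair only if $i \in E''$... The clean fix: since $i$ is in exactly one of $E', E''$, WLOG $i \in E''$ is the one NOT used; then relabel so that $M$ is the set with $i \in E \setminus E_M$ where $E_M$ is $M$'s swap-out set, giving a pair $(E_M, F_M)$ with $i \notin E_M$ — that's wrong too. The correct observation is: one of the two sets $M, N$, say $M$, has $i \notin M$, i.e. $i$ is swapped out in passing from $A$ to $M$; then $M = (A \setminus E_M) \cup F_M$ with $E_M \ni i$, $E_M \subsetneq E$ (proper since $M \neq C$), $|E_M| = |F_M|$ (from $|M| = |A|$), $E_M \subseteq A \setminus B$, $F_M \subseteq B \setminus A$, and $M \in \mathcal{I}$. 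This is a pair satisfying (1)--(3) with $|E_M| < |E|$, contradicting minimality. Hence no such $M, N$ exist, establishing (4).

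I expect the main obstacle to be exactly this last bit of set-algebra: correctly showing that one of the two sets $M, N$ realizing the decomposition $e_A + e_C = e_M + e_N$ (with $\{M,N\} \neq \{A,C\}$) can be written as $(A \setminus E_M) \cup F_M$ with $i \in E_M$, $|E_M| = |F_M|$, and $E_M$ a \emph{proper} subset of $E$ — the propriety uses $M \notin \{A, C\}$, and the membership $i \in E_M$ uses that $i$ belongs to exactly one side of the complementary swap. Everything else (non-emptiness of the family of candidate pairs, the sandwich containment $A \cap C \subseteq M, N \subseteq A \cup C$, and the cardinality arithmetic forcing the "complementary swap" structure) is routine given Lemma~\ref{sandwich-lemma} and the hypothesis that all members of $\mathcal{I}$ have size $k$.
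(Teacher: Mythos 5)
Your proposal is correct and is essentially the paper's argument in extremal rather than inductive clothing: the paper runs the same descent as an induction on $|A\cap B|$, always starting from the full swap $E=A\setminus B$, $F=B\setminus A$ and, when (4) fails, passing to the set $M$ with $i\notin M$ sandwiched between $A\cap B$ and $A\cup B$ and recursing on the pair $(A,M)$, which is exactly your step producing the strictly smaller pair $(E_M,F_M)=(A\setminus M,\,M\setminus A)$ with $i\in E_M\subsetneq E$, $|E_M|=|F_M|$, $F_M\subseteq F$. Aside from the exploratory false starts in your middle paragraph (which your final formulation correctly repairs), the argument is complete and matches the paper's proof.
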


\begin{proof}
    We proceed by induction on $m = |A \setminus B|$.
    Suppose first that $m = 1$.
    Then $A \setminus B=\{i\}$ and $B \setminus A = \{j\}$ for some $i, j \in [n]$.
    The sets $E = \{i\}$ and $F = \{j\}$ satisfy the conditions:
    (1) and (2) are immediate; (3) holds because
    $(A \setminus E) \cup F = (A \setminus \{i\}) \cup \{j\} = B$, which is in $\mathcal{I}$;
    and (4) holds because if
    $e_{M} + e_{N} = e_{A} + e_{B} = 2 e_{A \cap B} + e_{i} + e_{j}$,
    then $M \cap N = A \cap B$,
    from which it follows that $M$ is $(A \cap B) \cup \{i\} = A$
    or $(A \cap B) \cup \{j\} = B$.

    For the induction hypothesis, we suppose the result holds for all choices
    of $A, B \in \mathcal{I}$ with $|A \setminus B| < m$;
    we will prove the result also holds for all choices of $A, B \in \mathcal{I}$
    with $|A \setminus B| = m$.

    Let
    $A, B \in \mathcal{I}$ with $|A \setminus B| = m$, and fix $i \in A \setminus B$.
    Then $E = A \setminus B$ and $F = B \setminus A$
    satisfy conditions (1)--(3), because
    \begin{enumerate}
        \item
            $|E| = |A \setminus B| = |A| - |A \cap B| = |B| - |A \cap B| = |F|$,
            because $|A| = |B|$;
        \item
            $i \in E$, because $i$ is an element of $A \setminus B = E$;
        \item
            $(A \setminus E) \cup F = (A \setminus (A \setminus B)) \cup (B \setminus A) = B$,
            which belongs to $\mathcal{I}$.
    \end{enumerate}
    If condition (4) is also satisfied, then there is nothing more to do.
    If condition (4) does not hold, then we can replace $E$ and $F$ by two
    other sets that satisfy all the conditions, as follows.

    Suppose condition (4) fails. Then there exist
    $M,N\in\mathcal{I}$ such that $\{M, N\}\neq\{A, B\}$ and
    \begin{equation}
        \label{eM+eN=eA+eB}
        e_{M} + e_{N}
        = e_{A} + e_{B} = 2 e_{A \cap B} + e_{A \setminus B} + e_{B \setminus A}.
    \end{equation}
    Since $i \in A \setminus B$, it follows from \eqref{eM+eN=eA+eB}
    that $i$ belongs to $M$ or $N$, but not both.
    Without loss of generality, we assume $i \in M$ and $i \notin N$.
    It also follows from \eqref{eM+eN=eA+eB} that
    every element of $A \cap B$ is in $M$ and $N$,
    and so
    \begin{equation}
        \label{inclusions-of-intersections}
        \text{$A \cap B \subseteq A \cap M$ and $A \cap B \subseteq A \cap N$.}
    \end{equation}
    Both of these inclusions are strict. Indeed, the first is strict because
    $i \in A \cap M$ and $i \notin B$.
    To see why the second is strict, note that it suffices to show that
    $N$ contains an element of $A \setminus B$.
    Suppose the contrary.
    Then $N \subseteq (A \cap B) \cup (B \setminus A) = B$,
    which implies that $N = B$ because $|N| = |B|$.
    This in turn implies that $M = A$, which contradicts $\{M, N\} \neq \{A, B\}$.

    Since $A \cap B \subsetneq A \cap N$, it follows that
    $|A \setminus N| = |A| - |A \cap N| < |A| - |A \cap B| = |A \setminus B| = m$.
    By the induction hypothesis applied to $A, N \in \mathcal{I}$ and $i \in A \setminus N$,
    there exist $E, F$ satisfying:
    \begin{enumerate}[start=0]
        \item
            $E \subseteq A \setminus N$ and $F \subseteq N \setminus A$;
        \item
            $|E| = |F|$;
        \item
            $i \in E$;
        \item
            $(A \setminus E) \cup F \in \mathcal{I}$; and
        \item
            if $e_{A} + e_{(A\setminus E) \cup F} = e_{M'} + e_{N'}$
            with $M', N' \in \mathcal{I}$,
            then $\{M', N'\} = \{A, (A \setminus E) \cup F\}$.
    \end{enumerate}
    It remains to show that $E \subseteq A \setminus B$ and $F \subseteq B \setminus A$.
    These follow from \eqref{inclusions-of-intersections}
    and \eqref{eM+eN=eA+eB}:
    \begin{gather*}
        E \subseteq A \setminus N
        = A \setminus (A \cap N) \subseteq A \setminus (A \cap B) = A \setminus B
        \\
        F \subseteq N \setminus A \subseteq (A \cup B) \setminus A
        = B \setminus A.
        \qedhere
    \end{gather*}
\end{proof}

The first application of the lemma is a bound on the diameter
of the Birkhoff polytope of a graph.

\begin{theorem}\label{theo:diameterB}
    Let $G$ be a finite simple graph and let $\BP(G)$ be the corresponding Birkhoff polytope. Let $r= \max\{|A| : A \in \Stab(G)\}$.
    Then $$\delta(\BP(G))\leq r.$$
    That is, the diameter of the Birkhoff polytope of $G$ is at most its rank.
\end{theorem}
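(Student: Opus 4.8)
The plan is to bound the distance in the $1$-skeleton of $\BP(G)$ between any two vertices $e_A$ and $e_B$, where $A$ and $B$ are stable sets of $G$ of maximal cardinality $r$. I will produce a path of length at most $r$ by reducing $|A \setminus B|$ one ``exchange'' at a time, using Lemma~\ref{quasimatroid} applied to the family $\mathcal{I} = \Bases(G) = \{C \in \Stab(G) : |C| = r\}$, which is a nonempty family of $r$-subsets of $V$ and hence falls under the hypotheses of that lemma.

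First I would set up the induction on $|A \setminus B|$. If $A = B$ there is nothing to prove. Otherwise pick any $i \in A \setminus B$; by Lemma~\ref{quasimatroid} there exist $E \subseteq A \setminus B$ and $F \subseteq B \setminus A$ with $|E| = |F|$, $i \in E$, $A' := (A \setminus E) \cup F \in \Bases(G)$, and the crucial fourth property: $e_A + e_{A'} = e_M + e_N$ with $M, N \in \Bases(G)$ forces $\{A, A'\} = \{M, N\}$. By Theorem~\ref{BP1skeleton}, Part~\eqref{BP1skeleton-edges}, this fourth property says precisely that $e_A$ and $e_{A'}$ form an edge of $\BP(G)$ (note $A \neq A'$ since $i \in E$ means $i \notin A'$). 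So we may step from $e_A$ to $e_{A'}$ along an edge. Now I would check that $|A' \setminus B| < |A \setminus B|$: indeed $A' \setminus B = (A \setminus E) \setminus B = (A \setminus B) \setminus E$, and since $\emptyset \neq E \subseteq A \setminus B$ this is a proper subset of $A \setminus B$, so its cardinality drops by at least $1$ (in fact by $|E| \geq 1$). By the induction hypothesis there is a path from $e_{A'}$ to $e_B$ in the $1$-skeleton of length at most $|A' \setminus B|$, and prepending the edge $e_A e_{A'}$ gives a path from $e_A$ to $e_B$ of length at most $|A' \setminus B| + 1 \leq |A \setminus B|$.

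To finish, I would observe that $|A \setminus B| \leq |A| = r$ for any two bases, so $\delta(\BP(G)) \leq r$. The only subtlety to spell out carefully is the base case and the bookkeeping that the two new vertices produced by the exchange are genuinely distinct from one another (so that ``edge'' makes sense) and that $A'$ is strictly closer to $B$; both follow from $i \in E \subseteq A \setminus B$. I do not anticipate a serious obstacle here: all the real work — the exchange property together with the uniqueness-of-decomposition guarantee — has already been packaged into Lemma~\ref{quasimatroid}, and the translation ``unique decomposition $\Leftrightarrow$ edge'' is exactly Theorem~\ref{BP1skeleton}. The main thing to be careful about is simply invoking Lemma~\ref{quasimatroid} with the correct family, namely $\Bases(G)$ rather than all of $\Stab(G)$, since the lemma requires all members to have the same size.
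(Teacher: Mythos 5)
Your proof is correct and follows essentially the same route as the paper: apply Lemma~\ref{quasimatroid} to the family $\Bases(G)$, translate its uniqueness condition into an edge via Theorem~\ref{BP1skeleton}, and iterate until reaching $B$. The only cosmetic difference is that you run a clean induction on $|A\setminus B|$ while the paper tracks the strictly growing intersections $A\cap B\subset A_1\cap B\subset\cdots$, which are equivalent bookkeeping devices.
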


\begin{proof} Recall that $\BP(G)=\conv\{e_A : A \subseteq \Bases(G)\}$,
            where $\Bases(G) = \{A \in \Stab(G) : |A| = r\}$.
    Let $A,B\in \Bases(G)$ and fix $i\in A\setminus B$. By Lemma
    \ref{quasimatroid}, we can find $E\subseteq A\setminus B$ and $F\subseteq
    B\setminus A$ such that $i\in E$, $A_1=(A\setminus E)\cup F\in\Bases(G)$
    and for all $M,N\in\Bases(G)$ with $e_A+ e_{(A\setminus E)\cup
    F}=e_M +e_N$, we have $\{A, (A\setminus E) \cup F\}=\{M,N\}$.

    By Theorem \ref{BP1skeleton}, this condition is to say that $\{e_A,e_{A_1}\}$ is an
    edge in $\BP(G)$. Since $i\notin A_1$, we have $A\cap B\subset A_1\cap B$
    and the inclusion is strict. We can then repeat this process with $\{e_{A_1},e_B\}$
    to find $A_2\in\Bases(G)$ such that $\{e_{A_1},e_{A_2}\}$ is an edge in $\BP(G)$ and
    $A_1\cap B\subset A_2\cap B$ with strict inclusion.

    If we continue this process, we get $A\cap B\subset A_1\cap B\subset\dots
    \subset A_\ell\cap B=B$. Since all inclusions are strict, this process must
    terminate in at most $|B\setminus A|$ steps, which is at most
    $r=\max\{|A|:A\in\text{Stab}(G)\}$. Therefore, the distance from $e_A$ to $e_B$
    is at most $r$, via the edges $\{e_{A},e_{A_1}\}, \{e_{A_1},e_{A_2}\},\dots,\{e_{A_2},e_{A_\ell}\}$.
\end{proof}

\subsection{A bound on the diameter of $\SSP(G)$}

To bound the diameter of $\SSP(G)$, we need the following technical result.

\begin{lemma}\label{BaseChange}
    Let $A$ and $B$ be two stable sets of $G$, written as
    $A=\{a_1,\dots,a_k,c_1,\dots,c_\ell\}$ and
    $B=\{b_1,\dots,b_m,c_1,\dots,c_{\ell}\}$ where $a_i\neq b_j$ for all $i$
    and $j$.
    Then there exists a third stable set $C$ of $G$ such that $\{e_A, e_C\}$ is
    an edge in $\SSP(G)$,
    $\{c_1,\dots,c_\ell\}\subseteq C\subseteq A \cup B$
    and $C\cap\{b_1,\dots,b_m\}\neq\emptyset$.
\end{lemma}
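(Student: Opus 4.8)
The plan is to reduce this statement about arbitrary stable sets to the case of equicardinal sets already handled in the proof of Theorem~\ref{RP1skeleton}, Part~\eqref{RP1skeleton-edges}, Case~5. Recall that $A' = A \setminus B = \{a_1,\dots,a_k\}$ and $B' = B \setminus A = \{b_1,\dots,b_m\}$, with common part $A \cap B = \{c_1,\dots,c_\ell\}$. If $B' = \emptyset$ there is nothing to do in the sense that no such $C$ is being asked for (the hypothesis $C \cap \{b_1,\dots,b_m\}\neq\emptyset$ becomes vacuous to satisfy; I should note this degenerate case and assume $m \geq 1$). Likewise if $A' = \emptyset$, then by Case~3 of the proof of Theorem~\ref{RP1skeleton} every $C_i$ strictly between $A$ and $B$ equals $A$ or $B$, which forces us into the easy situation; I would handle $|A'|\le 1$ or $|B'|\le 1$ directly, since there the argument of Cases~2 and~3 applies verbatim. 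So the substantive case is $k = |A'| \geq 1$ and $m = |B'| \geq 1$.

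First I would pick any $x \in A'$ (say $x = a_1$) and form the set $B'_x = \{b' \in B' : \text{$x$ and $b'$ are adjacent in $G$}\}$ as in step~\ref{5a}. The key dichotomy: either $B'_x \neq \emptyset$ for our chosen $x$, or $B'_x = \emptyset$. In the second case, $B \cup \{x\}$ is stable by exactly the three-bullet argument of step~\ref{5a}, and then we would want to take $C$ to be something adjacent to $A$; but actually the cleaner move is to observe that in the second case $A \cup \{b'\}$ might fail to be stable, so instead one iterates: either some element of $A'$ has a nonempty $B'_x$, or every element of $A'$ is non-adjacent to all of $B'$, in which case $A \cup B$ itself is stable and we can take $C = A \cup B$ — but then $\{e_A, e_{A\cup B}\}$ is an edge only if $A\cup B$ covers $A$ in a suitable sense, which by Case~2/Case~3 analysis of Theorem~\ref{RP1skeleton} fails unless $|B'| = 1$. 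So the robust approach is: assume WLOG (by step~\ref{5b}, $B' = \bigcup_{x\in A'}B'_x$, so some $B'_x\neq\emptyset$) that $x = a_1$ has $B'_x \neq \emptyset$. Then run the construction of step~\ref{5f}: with $A'$ partitioned relative to a hypothetical $C_i$ one cannot directly invoke, so instead I would argue more directly. Set $C = (A \cap B) \cup \{a_2,\dots,a_k\} \cup B'_{a_1}$, i.e.\ swap out $a_1$ and swap in its neighbours in $B'$. Stability of $C$ follows by the bullet-point argument of step~\ref{5f} (elements of $A\setminus\{a_1\}$ are pairwise non-adjacent; elements of $B'_{a_1}\subseteq B$ are pairwise non-adjacent; a cross edge from $a_s$ ($s\ge 2$) to some $b\in B'_{a_1}$ would put $b$ in $B'_{a_s}$, but that does not immediately give a contradiction) — so I need to be careful here and may instead have to take $C = (A\cap B)\cup\{a_1\}\cup (\text{something})$. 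The honest version: let $C$ be a \emph{maximal} stable set with $A\cap B \subseteq C \subseteq A\cup B$ subject to containing at least one $b_j$; existence follows since $A\cap B\cup\{b_j\}$ is stable for any $b_j$ that is non-adjacent to all of $A\cap B$, which holds as $A\cap B, \{b_j\}\subseteq B$.

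The main obstacle, and where I expect to spend the real effort, is verifying that the $C$ I construct is genuinely \emph{adjacent} to $e_A$ in $\SSP(G)$, i.e.\ that $e_A + e_C = e_M + e_N$ forces $\{A,C\} = \{M,N\}$ — this is the translation, via Theorem~\ref{RP1skeleton}\eqref{RP1skeleton-edges}, of "$\{e_A,e_C\}$ is an edge." The cleanest path is to mimic Lemma~\ref{quasimatroid}: induct on $|A \cap C|$, or equivalently on $|A \triangle C|$, choosing $C$ at each stage to be of the form $(A \setminus E)\cup F$ with $E \subseteq A\setminus B$, $F\subseteq B\setminus A$, exactly as in that lemma but now dropping the cardinality bookkeeping (conditions (1)--(2) of Lemma~\ref{quasimatroid}) and keeping only (3) "$C$ stable" and (4) "$\{e_A,e_C\}$ an edge." The inductive step is verbatim the one in Lemma~\ref{quasimatroid}: if $e_A + e_C = e_M + e_N$ with $\{M,N\}\neq\{A,C\}$, then $A\cap C \subsetneq A\cap M \subsetneq \cdots$ (strict because the element of $A$ we swapped out lies in $A\setminus M$), so induction on the larger intersection produces an admissible $(E',F')$, and one checks $E'\subseteq A\setminus B$, $F'\subseteq B\setminus A$ so it is still a legal choice for the original $A,B$. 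The base case $|A\setminus B| = \{i\}$ gives $C = B$ directly. Finally I would record that the $C$ so produced satisfies $\{c_1,\dots,c_\ell\}\subseteq C$ (it always retains $A\cap B$) and $C\cap\{b_1,\dots,b_m\}\neq\emptyset$ (we swapped in at least the one required element, or if the process runs to completion $C = B$). Throughout, the containments $A\cap B\subseteq C\subseteq A\cup B$ are automatic from the shape $(A\setminus E)\cup F$ with $E\subseteq A\setminus B$ and $F\subseteq B\setminus A$.
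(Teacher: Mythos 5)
Your final plan --- iterate the non-edge criterion of Theorem~\ref{RP1skeleton} and strictly increase $A\cap C$, modelled on Lemma~\ref{quasimatroid} --- is in fact the same strategy the paper uses, but the two steps you declare ``verbatim'' do not transfer, and this is where the real content lies. First, the strict increase $A\cap C\subsetneq A\cap M$ is not justified by ``the element we swapped out lies in $A\setminus M$'': in Lemma~\ref{quasimatroid} strictness ultimately rests on equicardinality (if $M$ contained no element of $A\setminus C$ then $M\subseteq C$, and $|M|=|C|$ would force $M=C$, $N=A$, a trivial decomposition). Without the cardinality hypothesis this can genuinely fail: in $e_A+e_C=e_M+e_N$ all of $A\setminus C$ may land in $N$, so the component $M$ you want to recurse on gains no new element of $A$, while $N\supsetneq A$. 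This case cannot be recursed on and must be handled separately --- and it is exactly here that the paper stops the iteration: since $N$ is a stable set with $A\subsetneq N\subseteq A\cup B$, it contains some $b_j$, and then $C=A\cup\{b_j\}$ is stable and adjacent to $e_A$ (the sum $2e_A+e_{b_j}$ has a unique decomposition into two stable indicator vectors). Your earlier discussion of $B'_x$ and of a ``maximal stable set containing some $b_j$'' circles around this situation but is abandoned and never enters your induction; in particular adjacency of that maximal $C$ to $e_A$ is never addressed and need not hold.

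Second, by discarding conditions (1)--(2) of Lemma~\ref{quasimatroid} you delete the only mechanism that forces the output to meet $\{b_1,\dots,b_m\}$. The weakened statement ``there exist $E\subseteq A\setminus B$, $F\subseteq B\setminus A$ with $(A\setminus E)\cup F$ stable and adjacent to $e_A$'' is already satisfied by $E=\{i\}$, $F=\emptyset$, i.e.\ $C=A\setminus\{i\}$, which violates the conclusion of the lemma; and nothing in your recursion prevents the chosen component $M$ from being a subset of $A$, after which every element of $B\setminus A$ is lost for the rest of the process. The fix is to carry an invariant through the iteration: a fixed $b_j\in C_i\setminus A$ occurs exactly once in the multiset $A\uplus C_i$, hence lies in exactly one of $M,N$, and one must always recurse on that component (which then also meets $A\setminus C_i$, giving the strict increase, unless it contains all of $A$, which is the terminal case above). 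The paper's proof consists precisely of these two devices --- the separate case $A\subset C_1$ resolved by $A\cup\{b_j\}$, and the invariant $C_i\cap\{b_1,\dots,b_m\}\neq\emptyset$ maintained at every step --- and as written your proposal has neither, so your induction does not close.
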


\begin{proof}
    If $\{e_A,e_B\}$ is an edge in $\SSP(G)$, then we set $C=B$ and we are done.

    Otherwise, by Theorem \ref{RP1skeleton}, there exists a pair of vertices
    $C_1,D_1$ in $\SSP(G)$ such that $e_A+ e_B=e_{C_1} +e_{D_1}$ and
    $\{A,B\}\neq\{C_1,D_1\}$. Clearly $\{c_1,\dots,c_\ell\}\subset C_1\cap
    D_1$.

    If $A\subset C_1$, then there must exist some $b_i\in C_1$. We can set
    $C=A\cup \{b_1\}\in \SSP(G)$ and we are done.

    Therefore, without loss of generality, we can assume that $C_1\cap
    \{b_1,\dots,b_m\}\neq \emptyset$ and $C_1\cap
    \{a_1,\dots,a_k\}\neq\emptyset$. If $(A,C_1)$ is not an edge, we continue
    this process and get $C_2,D_2$ and so on. In each step, we have the
    following conditions
    \begin{enumerate}
        \item $A\cap C_1\subsetneq A\cap C_2\subsetneq\cdots\subsetneq A\cap C_t$, and
        \item $C_i\cap\{b_1,\dots,b_m\}\neq\emptyset$.
    \end{enumerate}
    Therefore, this process will eventually terminate at some $C_t$, and we
    find an edge that is either $\{e_{A},e_{C_t}\}$ or $\{e_A,e_{A\cup\{b_i\}}\}$ for some
    $b_i\in C_t$.
\end{proof}

Finally, we prove an upper bound for the diameter of $\SSP(G)$ in analogy with Theorem~\ref{theo:diameterB}.
\begin{proposition} \label{prop:diameter}
    If the largest size of a stable set in $G$ is $r$, then the diameter of
    $\SSP(G)$ is at most $r$.
    \end{proposition}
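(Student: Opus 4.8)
The plan is to adapt the argument of Theorem~\ref{theo:diameterB}, using Lemma~\ref{BaseChange} in place of the matroid exchange Lemma~\ref{quasimatroid}, and then to use the crude fact that every stable set has at most $r$ elements to keep the total path length at $r$ rather than $2r$.

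Fix two vertices $e_A$ and $e_B$ of $\SSP(G)$ with $A\neq B$. First I would build the path in two phases. In Phase~1, set $A_0=A$ and, as long as $B\not\subseteq A_i$, apply Lemma~\ref{BaseChange} to the pair $(A_i,B)$ to get a stable set $A_{i+1}$ such that $\{e_{A_i},e_{A_{i+1}}\}$ is an edge of $\SSP(G)$, $A_i\cap B\subseteq A_{i+1}\subseteq A_i\cup B$, and $A_{i+1}\cap(B\setminus A_i)\neq\emptyset$ (each $A_{i+1}$ is again a stable set, so the iteration is legitimate). From $A_i\cap B\subseteq A_{i+1}$ one gets $B\setminus A_{i+1}\subseteq B\setminus A_i$, and the third condition makes the inclusion strict, so $|B\setminus A_i|$ strictly decreases; hence Phase~1 halts after $\ell\leq|B\setminus A|$ steps at a stable set $A_\ell$ with $B\subseteq A_\ell$, joined to $e_A$ by an edge-path of length $\ell$. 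In Phase~2, since $B\subseteq A_\ell$, I would delete the elements of $A_\ell\setminus B$ one at a time; here I would check that for any stable set $Z$ and any $x\in Z$, an identity $e_Z+e_{Z\setminus\{x\}}=e_C+e_D$ with $C,D$ stable forces $\{C,D\}=\{Z,Z\setminus\{x\}\}$ --- compare coordinates, noting that $x$ receives weight $1$, each element of $Z\setminus\{x\}$ weight $2$, and everything else weight $0$ --- so by Theorem~\ref{RP1skeleton} every such deletion traverses an edge. This yields an edge-path of length $|A_\ell\setminus B|$ from $e_{A_\ell}$ to $e_B$.

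Concatenating the two phases produces an edge-path from $e_A$ to $e_B$ of length at most $\ell+|A_\ell\setminus B|\leq|B\setminus A|+|A_\ell\setminus B|$. To finish I would bound the two terms: $|B\setminus A|=|B|-|A\cap B|$, and since $A_\ell$ is a stable set containing $B$ we have $|A_\ell\setminus B|=|A_\ell|-|B|\leq r-|B|$; adding, the length is at most $(|B|-|A\cap B|)+(r-|B|)=r-|A\cap B|\leq r$, so $\delta(\SSP(G))\leq r$. I would also note that this covers the degenerate cases: if $B\subseteq A$ then Phase~1 is empty, if $A\subseteq B$ then Phase~2 is empty, and in both cases the same inequality applies.

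Phases~1 and~2 are essentially immediate from Lemma~\ref{BaseChange} and Theorem~\ref{RP1skeleton}, so the only real content --- and the step I would be most careful about --- is the final count. The point worth stressing is that a naive ``change one element at a time'' path from $A$ to $B$ has length $|A\triangle B|$, which can be as large as $2r$; it is precisely the uniform size bound $|A_\ell|\leq r$ on stable sets, used to control the overshoot $|A_\ell\setminus B|$ incurred in Phase~1, that forces the two phases together to fit inside $r$ steps.
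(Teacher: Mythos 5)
Your proof is correct and follows essentially the same route as the paper's: iterate Lemma~\ref{BaseChange} to enlarge the intersection with $B$ until reaching a stable set $A_\ell\supseteq B$, then delete the surplus elements of $A_\ell\setminus B$ one at a time, bounding the total length by $r$ via $|A_\ell|\le r$. The only differences are cosmetic: you verify explicitly (via Theorem~\ref{RP1skeleton}) that single-element deletions give edges, and your uniform count $r-|A\cap B|\le r$ lets you dispense with the paper's separate case $|A|+|B|\le r$, where it routes the path through the origin.
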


\begin{proof}
    Given two vertices $e_A,e_B$ in $\SSP(G)$, Let $A=\{a_1,\dots,a_m\}$,
    $B=\{b_1,\dots,b_\ell\}$. If $|A|+|B|\leq r$, then we can find a path
    $e_A,e_{A\setminus \{a_1\}},\dots,e_{\{a_m\}},0,e_{\{b_1\}},\dots,e_B$ of
    length $|A|+|B|$ that connects $e_A$ and $e_B$.

    Otherwise, by Lemma \ref{BaseChange}, we can find a path
    $e_A,e_{A_1},\dots,e_{A_t}$ such that $A\cap B\subset A_1\cap
    B\subset\cdots\subset A_t\cap B$ and $B\subseteq A_t$. And we have another
    path $e_{A_t},\dots, e_{A_{t+s-1}},e_{B}$ by removing the elements in $A_t\setminus B$.
    Since we have $t\leq \ell$, $|A_t|\leq r$ and $s\leq r-\ell$, the distance
    from $e_A$ to $e_B$ is at most $r$.
\end{proof}

\begin{remark}
    A result similar to Proposition~\ref{prop:diameter} also holds for the
    independence polytope $P_M$ of a matroid $M$: explicitly, we have
    $\delta\big(P_M\big)\le r$, where $r$ is the rank of $M$
    (that is, the largest size of an independent set).
    This follows by mimicking the proof of Proposition~\ref{prop:diameter}
    and replacing every use of Lemma~\ref{quasimatroid} by the basis exchange
    property of $M$. This result is well-known, so we do not include all the
    details.
\end{remark}

\subsection{Relationship with the Hirsch conjecture}

    We end this section by describing the relationship between the bounds
    proved in the last two subsections and the bound from the statement of the
    Hirsch conjecture.

    Let $G=(V,E)$ be a simple graph. In this context, the Hirsch conjecture
    asserts an upper bound on the diameter of the associated stable set
    polytope:
    \begin{equation*}
        \delta\big(\SSP(G)\big) \le n - d,
    \end{equation*}
    where $n$ is the number of facets of $\SSP(G)$ and $d = \dim(\SSP(G)) = |V|$.

    For any simple graph $G$, Equation~\eqref{always-facets}
 in Section~\ref{sec:OpenProb}
    describes two
    families of facet-defining inequalities of $\SSP(G)$. Since these
    inequalities are indexed by the vertices and the cliques of $G$, we have
    \begin{equation*}
        d + c \leq n,
    \end{equation*}
    where $d = |V|$ and $c=\big|\Cliq(G)\big|$.
    Moreover, $n = d + c$ if and only if $G$ is a perfect graph.

    On the other hand, since any stable set intersects a clique of $G$ in at
    most one vertex, we have $r \le c$, where $r$ is the largest size of
    a stable set in $G$.
    Then by Proposition~\ref{prop:diameter} we have
    \begin{equation*}
        \delta\big(\SSP(G)\big) \le r \le c \le n-d.
    \end{equation*}
    It turns out that $r < c$ in general, even when $G$ is a perfect graph.
    Hence, Proposition~\ref{prop:diameter} is an improvement on the Hirsch
    upper bound of $n - d$.

\section{Open problems}\label{sec:OpenProb}

We list here some interesting open problems related to $0/1$-polytopes
that satisfy \eqref{1-skeleton-condition}.

\subsection{Simplicial complex polytopes that satisfy (E)}

In Theorems~\ref{RP1skeleton} and \ref{matroid-polytopes-satisfy-(E)}
we proved that stable set polytopes and matroid polytopes satisfy
property~\eqref{1-skeleton-condition}.
Given that these polytopes belong to the family of simplicial
complex polytopes, it would be much more interesting to have a uniform proof of
these results. However, as we have seen in Section~\ref{ssec:ConjectureSCP},
not all simplicial complexes satisfy~\eqref{1-skeleton-condition},
so the first step in this direction would be the following.
\begin{problem} \label{Prob1}
Find a characterization of the simplicial complex polytopes that satisfy~\eqref{1-skeleton-condition}.
\end{problem}

We point out that one cannot simply adapt the proof of Theorem~\ref{RP1skeleton} as steps
\ref{5a} and \ref{5b} do not hold for all simplicial complex polytopes (or even
matroid polytopes).
To see this, take the polytope $\widehat C$ in \eqref{ex:TrucatedCube}; using
$A=\{1,2\}$ and $B=\{1,3\}$, we obtain $B'_2=\emptyset$ and $B'\ne B'_2$.

Another difficulty arises from a particularity of stable set polytopes:
if there are enough small stable sets, then they can be combined to build
larger stable sets. This behaviour is quite different than what happens for
simplicial complex polytopes and matroid polytopes.

One possible approach is to adapt the proof of
Theorem~\ref{matroid-polytopes-satisfy-(E)} using a stronger version of
Lemma~\ref{quasimatroid}.
To this end, it would be useful to have a characterization of the elements that
can be removed from a simplicial complex satisfying property~\eqref{1-skeleton-condition}
so that the resulting polytope still preserves
property~\eqref{1-skeleton-condition}.
This could afford an inductive approach to Theorem~\ref{RP1skeleton}.

\subsection{The Mihai--Vazerani conjecture}

The \emph{Mihai--Vazerani conjecture for $0/1$-polytopes} asserts that for
every partition $\mathcal{S} \uplus \mathcal{T}$ of the set of vertices
of the polytope, the number of edges between $\mathcal{S}$ and $\mathcal{T}$
is at least $\min(|\mathcal{S}|, |\mathcal{T}|)$.
In the terminology of expander graphs, the conjecture asserts that
the $1$-skeleton of a $0/1$-polytope is a \emph{1-expander graph}.
Although the conjecture is open in general, it holds for stable set
polytopes~\cite{Kaibel01} and matroid polytopes~\cite{ALGV19}.
Since these $0/1$-polytopes satisfy \eqref{1-skeleton-condition},
it is natural to study this conjecture in this context.

\begin{problem} \label{Prob2}
    Suppose $P$ is a $0/1$-polytope satisfying property \eqref{1-skeleton-condition}.
    Determine whether the $1$-skeleton of $P$ is a $1$-expander graph.
    (This holds for stable set and matroid polytopes \cite{Kaibel01, ALGV19}.)
\end{problem}

\subsection{Describing the facets of $0/1$-polytopes}

\begin{problem} \label{Prob3}
Describe the facets of some families of $0/1$-polytopes satisfying~\eqref{1-skeleton-condition}.
A description is known for matroid polytopes~\cite{Edmonds1970} and for stable set polytopes of perfect graphs~\cite{Chvatal1975}.
\end{problem}

There is no known complete description of the facets of the stable set polytope
of an arbitrary graph. In fact, it is most likely an intractable
problem since the problem of finding the size of a maximal stable set of $G$ is
known to be NP-hard. However, some information is known, and we present below
partial descriptions for some polytopes from Section~\ref{sec:polytopes}.

\subsubsection{Some inequalities valid for all stable set polytopes}
\label{section-always-facets}

Padberg~\cite{Padberg1973} proved the following two families of
inequalities define facets of $\SSP(G)$ for any finite graph $G = (V, E)$:
\begin{equation}
    \label{always-facets}
    0 \leq x_v \quad (v \in V)
    \qquad\text{and}\qquad
    \sum_{v \in C} x_v \leq 1 \quad (C \in \Cliq(G)),
\end{equation}
where $\Cliq(G)$ is the set of cliques of a graph $G$.
Chv{\'a}tal proved that these two families constitute a complete description of
the facets if and only if $G$ is a perfect graph \cite[Theorem~3.1]{Chvatal1975}.
(Recall that a graph is \emph{perfect} if for each subgraph $G'$, the
chromatic number of $G'$ is equal to the maximal cardinality of clique of
$G'$.)

\subsubsection{Chain Polytopes and the Nonnesting Partition Polytopes}
\label{facets-NNP}

If $G_P$ is the comparability graph of a partial order $P$, then $\SSP(G_P)$ is
the poset chain polytope introduced by
Stanley~\cite{Stanley1986} (see Section~\ref{sec:chain-polytope}). Stanley
described the facets by noting that the graph $G_P$ is perfect, and so the
facets are given by \eqref{always-facets}: there is one facet for each element
$x$ of the poset; and one facet for each \emph{maximal} chain $C$ of the poset.
In particular, this gives a complete description of all the facets of the
nonnesting partition polytopes $\NNP_n$ defined in Section~\ref{sec:NNP}.

\subsubsection{Bell polytopes of type $A$.}
\label{facets-Bell-polytopes-A}

In J. Pulido's B.~Sc. Thesis \cite{pulidothesis},
it is shown that all the facets of the Bell polytopes defined in
Section~\ref{sec:bell-polytope} are of the form given by~\eqref{always-facets}.
(Note that these polytopes are not chain polytopes of some poset.)
Explicitly, the second family of inequalities are
\begin{equation*}
    \sum_{i < j \leq n} x_{(i,j)} \leq 1 \quad (1 \leq i < n)
    \qquad\text{and}\qquad
    \sum_{1 \leq i < j} x_{(i, j)} \leq 1 \quad (1 < j \leq n).
\end{equation*}

\subsubsection{Bell polytopes of type $B$.}
\label{facets-Bell-polytopes-B}
The Bell polytope of type $B$ was independently studied by Allen~\cite{Allen2017}.
Again, all the facets of the Bell polytopes of type $B$
are described by~\eqref{always-facets}.
Explicitly, the second family of inequalities are
\begin{equation*}
    \sum_{i \leq j \leq n} x_{(i, j)} \leq 1 \quad (1 \leq i \leq n)
    \qquad\text{and}\qquad
    \sum_{1 \leq i \leq j} x_{(i, j)} \leq 1 \quad (1 \leq j \leq n).
\end{equation*}

\subsubsection{Noncrossing partition polytopes}
\label{facets-NCP}
The inequalities in Equation~\eqref{always-facets} are not sufficient to
describe all the facets of the noncrossing partition polytopes $\NCP_n$ (see
Section~\ref{sec:NCP}).
For example, when $n = 6$,
the two families in \eqref{always-facets} account for $15$ facets and $16$
facets, respectively, whereas $\NCP_6$ has $32$ facets.
The missing facet is defined by the hyperplane
\begin{equation*}
    x_{(1, 3)} + x_{(1, 5)} + x_{(1, 6)} + x_{(2, 3)} + x_{(2, 4)} + x_{(2, 5)} + x_{(2, 6)} + x_{(4, 5)} + x_{(4, 6)} + x_{(5, 6)} = 2.
\end{equation*}
Our computations suggest that the facets of $\NCP_m$ are supported by
hyperplanes of the form $\sum_{a \in X} c_a x_a = m$ with $m, c_a \in \NN$.
When $n = 8$, some coefficients $c_a$ are greater than $1$.

\subsubsection{Matroid polytopes} \label{ssec:facets-matroid}
The facets of the independence polytope of a loopless matroid $M$
were first described by Edmonds~\cite{Edmonds1970}.
They admit the following description \cite[Theorem 40.5]{Schrijver03}:
\begin{equation*}
    0 \leq x_v \quad (v \in V)
    \qquad\text{and}\qquad
    \sum_{v \in F} x_v \leq \rank(F) \quad (F \text{ non-empty inseparable flat of } M).
\end{equation*}

\subsubsection{Simplicial complex polytopes}
It would be quite interesting to generalize the description of
the facets in Section~\ref{ssec:facets-matroid} to some simplicial
complex polytopes. More particularly, a stronger version of
Lemma~\ref{quasimatroid} might help describe the facets for pure simplicial
complexes.
For the moment, this seems inaccessible, but we hope to realize progress
fairly soon.

\bibliographystyle{amsplain}
\bibliography{references}

\end{document}